\begin{document}

\numberwithin{equation}{section}
\renewcommand{\theequation}{\thesection.\arabic{equation}}
\reversemarginpar

\newcommand{\Ac}{\mathcal A}
\newcommand{\Asc}{\mathscr A}
\newcommand{\Bc}{\mathcal B}
\newcommand{\Bsc}{\mathscr B}
\newcommand{\Ec}{\mathcal E}
\newcommand{\Ectau}{\mathcal{E}(\tau)}
\newcommand{\Ect}{\tilde{\Ec}}
\newcommand{\Esc}{\mathscr E}
\newcommand{\eps}{\epsilon}
\newcommand{\Fc}{\mathcal F}
\newcommand{\Fsc}{\mathscr{F}}
\newcommand{\Fsco}{\mathscr{F}_0}
\newcommand{\Gamk}{\Gamma_k}
\newcommand{\Gsc}{\mathscr{G}}
\newcommand{\Ic}{\mathcal{I}}
\newcommand{\Isc}{\mathscr{I}}
\newcommand{\Lsc}{\mathscr{L}}
\newcommand{\Mcl}{{\mathcal M} \, }
\newcommand{\Msc}{\mathscr M}
\newcommand{\Msctau}{\mathscr{M}_{\tau}}
\newcommand{\N}{\mathbb{N}}
\newcommand{\Nb}{\mbox{$\mathbb N$}}
\newcommand{\Om}{\Omega}
\newcommand{\bdy}{\partial\Omega}
\newcommand{\Omb}{\overline{\Omega}}
\newcommand{\pal}{\partial}
\newcommand{\Pc}{\mathcal P}
\newcommand{\Qc}{\mathcal Q} 
\newcommand{\R}{\mathbb R}
\newcommand{\Rc}{\mathcal R}
\newcommand{\Rb}{\overline{\R}}
\newcommand{\Rn}{{\R}^n}
\newcommand{\RN}{{\R}^N}
\newcommand{\Rp}{[0,\infty)}
\newcommand{\ra}{\rightarrow}
\newcommand{\Sc}{\mathcal S}
\newcommand{\ub}{\bar{u}}
\newcommand{\uh}{\hat{u}}
\newcommand{\ut}{\tilde{u}}
\newcommand{\vt}{\tilde{v}}
\newcommand{\ds}{\displaystyle}
\newcommand{\ve}{\varepsilon}
\newcommand{\lam}{\lambda}


\newcommand{\aip}[1]{\langle#1\rangle}
\newcommand{\mtau}{m_{\tau}}
\newcommand{\n}[1]{\left\vert#1\right\vert}
\newcommand{\nm}[1]{\left\Vert#1\right\Vert}


\newcommand{\Div}{\mathop{\rm div}\nolimits}
\newcommand{\dsg}{d \sigma}
\newcommand{\gradu}{\nabla u}
\newcommand{\gradv }{\nabla v}
\newcommand{\Gscmu}{\Gsc(.,\mu)}
\newcommand{\Ibdy}{ \int_{\bdy}} 
\newcommand{\IOm}{\int_{\Om}}

\newcommand{\tlam}{\tilde{\lambda}}
\newcommand{\lamo}{\lambda_1}
\newcommand{\lamt}{\lambda_2}
\newcommand{\lami}{\lambda_i}
\newcommand{\lamj}{\lambda_j}
\newcommand{\lamk}{\lambda_k}
\newcommand{\laml}{\lambda_l}
\newcommand{\lamotau}{\lambda_1(\tau)}
\newcommand{\lamttau}{\lambda_2(\tau)}
\newcommand{\lamjtau}{\lambda_j(\tau)}

\newcommand{\Lamtau}{\Lambda(\tau)}

\newcommand{\eotau}{e_1(\tau)}
\newcommand{\ejtau}{e_j(\tau)}


\newcommand{\barr}{\begin{eqnarray}}
\newcommand{\beq}{\begin{equation}}
\newcommand{\bpf}{\begin{proof} \quad}
\newcommand{\btm}{\begin{thm}}
\newcommand{\blem}{\begin{lem}}
\newcommand{\elem}{\end{lem}}
\newcommand{\bcor}{\begin{cor}}
\newcommand{\ecor}{\end{cor}}
\newcommand{\earr}{\end{eqnarray}}
\newcommand{\eeq}{\end{equation}}
\newcommand{\epf}{\end{proof}}
\newcommand{\etm}{\end{thm}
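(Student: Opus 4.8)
The natural starting point is the variational characterization of $\lamotau$ as the infimum of the Rayleigh quotient attached to the $\tau$-dependent bilinear form, taken over the energy space underlying $\Msctau$ and subject to the relevant $L^{2}$-normalization (over $\Om$ or over $\bdy$, according to how the problem is posed). First I would run the direct method: a minimizing sequence $(u_{k})$ is bounded in the energy norm, so after passing to a subsequence it converges weakly there and, by the compact embedding available in this setting, strongly in the normalizing $L^{2}$ space; lower semicontinuity of the quadratic form then yields a minimizer $\eotau$, and testing the Euler--Lagrange identity against arbitrary competitors shows $\eotau$ solves the eigenvalue equation with eigenvalue $\lamotau$. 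Standard elliptic regularity, using the smoothness assumed for $\bdy$ and for the coefficients, gives $\eotau$ enough regularity to support pointwise arguments.

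The next step is to show $\eotau$ can be chosen of one sign and that $\lamotau$ is simple. Replacing $u$ by $\n{u}$ leaves $\IOm \n{\gradu}^{2}$ unchanged and does not increase the lower-order contributions, so $\n{\eotau}$ is again a minimizer, hence a nonnegative eigenfunction; the strong maximum principle (or a Harnack inequality, in its boundary form where needed) then forces $\eotau > 0$ in $\Om$. Simplicity follows: any two eigenfunctions for $\lamotau$ may each be taken strictly positive, which is incompatible with the $L^{2}$-orthogonality forced inside a single eigenspace; equivalently, choosing the constant $t$ so that $\eotau - t\, \ejtau$ vanishes somewhere forces $\eotau \equiv t\, \ejtau$ by strict positivity. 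If monotonicity or continuity of $\tau \mapsto \lamjtau$ is part of the claim, it comes from the min--max formula: $\lamjtau$ is the infimum over $j$-dimensional subspaces of the Rayleigh quotient, whose only $\tau$-dependence sits in the explicit $\tau$-terms of the form, so their monotonicity transfers directly, and for continuity one compares the quotient at $\tau$ and $\tau'$ on a common test subspace and lets $\tau' \ra \tau$.

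The step I expect to be the real obstacle is the strict positivity / simplicity argument: one must check that the regularity of $\bdy$ and of the coefficients is exactly what is needed to invoke the maximum principle (and Hopf's lemma up to $\bdy$, when the normalization lives there), and to rule out the eigenfunction vanishing on part of $\bdy$. A secondary difficulty is verifying that the energy space controlling $\Msctau$ embeds compactly into the relevant $L^{2}$ space, and — for the $\tau$-dependence — that this embedding and the comparison of Rayleigh quotients are uniform enough in $\tau$ on compact parameter ranges. Everything else is the standard direct-method and min--max bookkeeping.
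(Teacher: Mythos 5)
Your proposal does not line up with what the paper actually proves here. The results in \S 3 concerning the $\tau$-dependent eigendata (that $\tilde{\mu}_k(\lambda,\tau+\delta)=\tilde{\mu}_k(\lambda,\tau)+\delta$, that $\mu_k(\lambda,\tau):=\tilde{\mu}_k(\lambda,\tau)-\tau$ and the canonical eigenvectors $e_k(\lambda,\tau)$ are independent of $\tau$, and the norm identities for $e_k$) are proved by one-line algebraic manipulations of the eigenequation $a_{\lambda,\tau}(e,v)=(\mu+\tau)\,m(e,v)$: one adds or subtracts $\delta\,m(\cdot,\cdot)$ from both sides and identifies the resulting equation as the eigenequation for the shifted parameter. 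The \emph{existence} of the eigenvalue sequence and an $m$-orthogonal basis of eigenvectors is not proved in this paper at all; it is imported from Auchmuty's algorithm and from the coercivity result of Rivas--Robinson, which is why the direct-method machinery you set up (minimizing sequences, weak compactness, lower semicontinuity, min--max over $j$-dimensional subspaces) does not appear anywhere in the text.

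More importantly, the centerpiece of your sketch --- positivity of a first eigenfunction via $u\mapsto\n{u}$, the strong maximum principle, Hopf's lemma, Harnack up to $\bdy$, and simplicity of $\lamotau$ --- is not asserted by any statement in the paper and is not available in its setting. Sections 2--6 work with an abstract triple $(a,b,m)$ of bilinear forms on a separable Hilbert space $V$: there is no domain, no boundary regularity, no pointwise structure, and the form $b$ is allowed to be indefinite, so there is no order structure or maximum principle to invoke. Even in the Steklov--Robin example of \S 7 the coefficients are merely measurable and bounded, and the paper never claims simplicity of the first eigenvalue. So the concrete gap is twofold: (i) you are proving a different statement (existence, positivity, simplicity of a principal eigenvalue) rather than the $\tau$-invariance identities the paper establishes, and (ii) the tools you rely on for the part you yourself flag as the ``real obstacle'' have no counterpart in the paper's hypotheses, so that part of the argument cannot be carried out as written. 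What the paper actually needs at this point is only the elementary observation that shifting $\tau$ by $\delta$ shifts every $\tilde{\mu}_k$ by exactly $\delta$ and leaves the eigenspaces fixed; your analytic apparatus is both far heavier than required and, in the abstract setting, not applicable.
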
}
\newcommand{\beqq}{\begin{equation*}}
\newcommand{\eeqq}{\end{equation*}}


\newcommand{\Elam}{E_{\lambda}}
\newcommand{\Hone}{H^1(\Om)}
\newcommand{\Hones}{\Hone \times \Hone} 
\newcommand{\Hz}{H_{0}^1(\Om)}
\newcommand{\HLap}{H(\Delta, \Omega)}
\newcommand{\Harm}{\mathcal{H}(\Omega)}
\newcommand{\HLapz}{H_0(\Delta, \Omega)}
\newcommand{\HLapzz}{H_{00}(\Delta, \Omega)}
\newcommand{\Linb}{L^{\infty}(\bdy,\dsg)}
\newcommand{\Lp}{L^p (\Om)}
\newcommand{\Lpb}{L^p (\bdy, \dsg)}
\newcommand{\Lq}{L^q (\Om)}
\newcommand{\Lt}{L^2 (\Om)}
\newcommand{\Ltb}{L^2 (\bdy,\dsg)}
\newcommand{\RxHone}{\R \times \Hone}

%
\newtheorem{thm}{Theorem}[section]
\newtheorem{cor}[thm]{Corollary}
\newtheorem{cond}{Condition}
\newtheorem{lem}[thm]{Lemma}
\theoremstyle{definition}
\newtheorem{rem}[thm]{Remark}

\title[A Nonlinear Fredholm Alternative]
{A Fredholm Alternative for Elliptic Equations with Interior and Boundary Nonlinear Reactions}
\author[Maroncelli \, \&  \, Rivas]{Dan Maroncelli \,  \&  \, M.A. Rivas}

\address{Daniel Maroncelli \newline
Department of Mathematics, College of Charleston \newline
66 George Street, Charleston, SC 29424 USA}
\email{maroncellidm@cofc.edu}

\address{Mauricio A. Rivas \newline
Department of Mathematics and Statistics, \, North Carolina A\&T State University \newline
1601 E. Market Street
Greensboro, NC 27411 USA}
\email{marivas@ncat.edu}

\subjclass[2020] {Primary: 35J60, 35J65}
\keywords{Two-parameter Fredholm Alternative, Eigencurves, Robin-Steklov problems, nonlinear elliptic problems, nonlinear boundary conditions}
\date{\today}

\begin{abstract} 
In this paper we study the existence of solutions to the following generalized nonlinear two-parameter problem
\beqq
a(u, v) \; =\; \lambda\, b(u, m) + \mu\, m(u, v) + \varepsilon\, F(u, v),
\eeqq
for a triple $(a, b, m)$ of continuous, symmetric bilinear forms on a real separable Hilbert space $V$ and nonlinear form $F$.
This problem is a natural abstraction of nonlinear problems that occur for 
a large class of differential operators, various elliptic pde's with nonlinearities in either the differential equation and/or the boundary conditions being a special subclass. 
First, a Fredholm alternative for the associated linear two-parameter eigenvalue problem is developed, and then this is used to construct a nonlinear version of the Fredholm alternative.
Lastly, the Steklov-Robin Fredholm equation is used to exemplify the abstract results.
\end{abstract}
\maketitle

**\textbf{ This work has been accepted for publication in the journal- Topological Methods in Nonlinear Analysis.}

\section{\bf Introduction}



This paper studies the existence of solutions to the generalized nonlinear two-parameter problem given by
\beq\label{E.Non1}
a(u, v) \; =\; \lambda\, b(u, m) + \mu\, m(u, v) + \varepsilon\, F(u, v),
\eeq
where $(a, b, m)$ is a triple of continous, symmetric bilinear forms on a separable Hilbert space $V$ and $F$ is a nonlinear form.
Our work has two distinct parts. 
Formulated first is a {\it two-parameter Fredholm alternative} for the triple $(a, b, m)$.
The alternative and the explicit spectral formulae for the solutions are given in terms of the eigendata associated with the {\it variational eigencurves} of $(a, b, m)$ as obtained in \cite{RR19}. 
Then existence results to \eqref{E.Non1} for cases of {\it nonresonance} and {\it resonance} are described in detail.
The formulation \eqref{E.Non1} is called the nonlinear {\it Fredholm equation} for $(a, b, m)$ as it subsumes various versions of such equations.

Our abstract setting enables the study of large classes of PDEs posed in weak form since the conditions imposed on $a, b, m$ are simple enough that they allow treating, for example, very general second-order elliptic equations with (possibly) nonlinear boundary conditions posed in weak form.
This is because data coefficients of the equations may be quite general, including sign-changing weights, and  yet it is easy to verify that the bilinear forms arising from the weak formulation of these differential equations satisfy our general assumptions.
Moreover, the use of bilinear forms directly, as opposed to densely defined linear operators, facilitates imposing other boundary conditions, such as Robin, Steklov, or in some cases mixed boundary conditions, on these equations (see \S 8, 9, 10 of \cite{AR16} concerning tensor product spaces.).
The methodology provides for a straightforward, but not trivial, inspection of such problems and for much simpler proofs.

Our study is motivated by the renewed interest and applications of two-parameter nonlinear problems with non-Dirichlet boundary conditions.
For instance, in some recent papers of Shivaji and collaborators, such as \cite{ShivSon19}, \cite{ShivSon21}, or \cite{ShivRob18},  classes of singular or exact reaction diffusion equations where a parameter in the differential equation influences another parameter in the boundary condition are examined.
A related first eigencurve is often a tool in describing stability or bifurcation results for such problems.
Sign-changing eigenvalue problems for  triples of bilinear forms are studied by Kielty in \cite{K21} that are  applicable to, for example, nonlinear population dynyamics.
There the author describes how the spectrum of the problem converges as the negative part of the weight is taken to negative infinity.
This can be rephrased as increasing one eigenparameter while maintaining the other fixed.
See \cite{RR19} that cites other recent uses of eigencurves in advanced scientific applications.

An interesting related work is that of Peitgen and Schmitt in \cite{PS84}.
There the authors treat very general two-parameter second-order elliptic equations, but with the nonlinearity only on the differential equation and only the Dirichlet boundary condition being imposed.
Their method of proof includes linearization and (Learay-Schauder) continuation techniques.
A wealth of detailed examples, as well as numerical work, is also provided in \cite{PS84}.
The present work can be regarded as a supplement of the analysis of Peitgen and Schmitt as our approach is both topological and variational in nature.

In addition, the results obtained in this paper complement both the {geometrical results} in \cite{RR19} for the {variational eigencurves} of $(a, b, m)$ and the {solvability results} of Mavinga and Nakashama in \cite{MN10}.
In \cite{RR19}, attention is given to producing and describing the spectrum of the triple $(a, b, m)$.
Here, the focus is on using the spectrum of $(a, b, m)$ in studying the solvability of nonlinear elliptic problems with nonlinear boundary conditions.
In \cite{MN10}, existence results are obtained according to how their nonlinearities interact with so-called ``eigenvalue-lines'', and here we give existence results based on how the nonlinearity $F$ in \eqref{E.Non1} interacts with the variational eigencurves.

Similar spectral representations and decompositions as used here have recently been invoked by Manki Cho in \cite{Cho20} and  De Gournay, Allaire and Jouve in \cite{DeG08}
to provide numerical results in different areas.
In \cite{Cho20}, new {\it meshless} numerical methods are introduced in the study of regularized Laplacian boundary value problems that yield effective and accurate approximations.
In \cite{DeG08}, both theorerical and numerical results are given for the robust compliance problem that appears in shape and topology optimization.
A nice version of the Fredholm alternative is given in Section 2.3.1 of \cite{DeG08} and may be compared to that given here.

The paper is organized as follows.  In \S \ref{s.definitions}, the notation and assumptions are presented.
The {\it canonical eigendata}, and their related properties, for the triple $(a, b,m)$ of bilinear forms are detailed in \S \ref{s.canonicalData}. 
The two-parameter Fredholm Alternative for $(a, b, m)$ is stated and proved in \S \ref{s.LinearFredholm}.
The nonlinear Fredholm Alternative for $(a, b, m)$ is treated next, with the nonresonance case in \S \ref{s.Nonresonance} and the resonance case in \S \ref{s.Resonance}.
The abstract results are then exemplified in \S \ref{s.SRFE} that treats the {\it Steklov-Robin Fredholm Equation}.
To finish, the appendix in \S \ref{s.appendix} provides conditions for which the  Fr\'{e}chet differentiability requirement of our main results, theorems \ref{T:Nonres} and \ref{T:Res}, is satisfied. For simplicity, the result is stated for a class of nonlinearities in the one-dimensional setting, but higher-dimensional analogs exist under appropriate assumptions.

\vspace{1em}
\section{\bf Definitions and Notation}\label{s.definitions}

In this paper, $V$ will denote a real separable Hilbert space with inner product and norm given by $\langle \cdot, \cdot\rangle_V$ and $\|\cdot\|_V$, respectively, and whose dual space is $V^*$.
The {\it two-parameter eigenproblem} for the triple $(a, b, m)$ of bilinear forms on $V$ is that of finding $(\lambda, \mu)\in \R^2$ and nontrivial $e\in V$ satisfying
\beq\label{e.2Pproblem}
a(e, v) \; =\; \lambda\, b(e, v)  +  \mu\, m(e, v) \qquad \text{for all }v\in V.
\eeq
A nonzero $e$ is called an {\it eigenvector} associated to the {\it eigenpair} $(\lambda, \mu)$ of $(a, b, m)$, and the problem corresponding to \eqref{e.2Pproblem} will be called the $(a, b,m)$-{\it eigenproblem}.
The subset $\sigma(a, b, m)\subset \R^2$ comprising all eigenpairs $(\lambda, \mu)$ will be called the {\it spectrum} of $(a, b, m)$.

We will denote the associated quadratic forms on $V$ for $a,b$, and $m$ by $\mathcal{A}, \mathcal{B}$, and $\mathcal{M}$, respectively, so then 
\beq
\Ac(u) \; := \; a(u, u) \; , \qquad \Bc(u) \; := \; b(u, u) \; , \qquad \text{and}\qquad \mathcal{M}(u) \; : = \; m(u, u) \qquad \text{for all }u\in V.
\eeq
Our assumptions on the bilinear forms $a, b, m$ are:
\begin{itemize}
\item[\bf (A1):]   $a(\cdot, \cdot)$ is a coercive, continuous, symmetric bilinear form on $V$, so there are constants $0<k_0\leq k_1  < \infty$ such that
\beq
 \qquad k_0 \|u\|_V^2 \; \leq \; \mathcal{A}(u) \;  \leq \; k_1\|u\|_V^2  \qquad \text{for all }u\in V.
\eeq 
\item[\bf (A2):]  $b(\cdot, \cdot)$ is a weakly continuous, symmetric bilinear form on $V$, and $\mathcal{B}$ may be an indefinite form so that it could attain positive, negative, and zero values.
\item[\bf (A3):]   $m(\cdot, \cdot)$ is a weakly continuous, symmetric bilinear form on $V$ that satisfies
\beq
\mathcal{M}(u)\; > \; 0 \qquad \text{for all nonzero }u\in V.
\eeq
\end{itemize}

When these assumptions hold, both $a, m$ will be inner products on $V$, with $a(\cdot, \cdot)$ equivalent to the $V$-inner product, and there is an implicit compactness that underpins the spectral analysis for bilinear forms.

For concreteness, we mention here that assumptions A1-A3 do not hold for all weak formulations of differential equations with arbitrary weight functions. An easy example not satisfying our assumptions, often overlooked by experts, is
\[
V\; = \; H^1(\R), \quad a(u, v) \; : = \; \int_\R [u'v' +uv]\, dx , \quad b(u, v)\; := \; \int_\R b_0 \, u \, v\, dx \, \quad m(u, v)\; := \; \int_\R m_0\, u\, v\, dx,
\]
with $b_0, m_0\in L^\infty(\R)$ such that $m_0(x) >0$ for all $x\in \R$, since $m$ is not weakly continuous.
Thus, to apply the results given in this paper it is vital to verify that all assumptions on $a, b, m$ hold.

\vspace{1em}
\section{\bf Canonical spectral data for two-parameter eigenproblems}\label{s.canonicalData}

To determine the spectral data for the two-parameter problem \eqref{e.2Pproblem}, the problem in \cite{RR19} is shifted to get, for fixed $\lambda$, the one-parameter eigenproblem of finding $\mu+\tau$ and nontrivial $e$ satisfying
\beq\label{e.Pmutau}
a_{\lambda, \tau}(e, v)  \; =\; (\mu + \tau)\, m(e, v) \qquad \text{for all }v\in V,
\eeq
where fixed $\tau>0$ is large enough so that by Theorem 3.1 of \cite{RR19} the form
\beq\label{e.aLT} 
a_{\lambda,\tau} := a - \lambda\, b + \tau\, m
\eeq 
is coercive on $V$.
Using the algorithm described in \cite{Au4}, the eigendata for $(a_{\lambda, \tau}\, , \, m)$, i.e. solutions of \eqref{e.Pmutau}, is found to be
\[
\mu +\tau = \tilde{\mu}_k(\lambda, \tau) \qquad \text{and}\qquad e=\tilde{e}_k(\lambda, \tau)\; ,  \qquad k\in \N,
\]
where the eigenvectors are $a_{\lambda, \tau}$-normalized, i.e.
\beq\label{e.aLTnorm}
\|\tilde{e}_k(\lambda, \tau)\|_{a_{\lambda, \tau}}^2 \; := \; \Ac_{\lambda, \tau}( \tilde{e}_k(\lambda, \tau) ) =1.
\eeq 
Rearranging the eigenequation for $(a_{\lambda, \tau}\, , \, m)$ gives that this eigendata satisfies, for fixed $\lambda, \tau$:
\beq\label{e.edata}
a(\tilde{e}_k(\lambda, \tau), v) \;\; = \; \; \lambda\, b(\tilde{e}_k(\lambda, \tau), v) \; + \; (\tilde{\mu}_k(\lambda, \tau) -\tau)\, m(\tilde{e}_k(\lambda, \tau), v) \quad \text{for all } v\in V.
\eeq

For each $i=1,2, \cdots$, define $E_i(\lambda, \tau)$ to be the eigenspace generated by the eigenvectors of $(a_{\lambda, \tau}\, , \, m)$ associated with eigenvalue $\tilde{\mu}_i(\lambda,\tau)$.
Eigenvalues may have multiplicity higher than 1, so if $\tilde{\mu}_1(\lambda, \tau) < \tilde{\mu}_2(\lambda, \tau)= \tilde{\mu}_3(\lambda, \tau)$, then $\tilde{e}_3(\lambda, \tau) \in E_2(\lambda, \tau)$.   
The next result shows that $\tilde{\mu}_k$ is a linear function of $\tau$.

\blem 
Assume $(A1), (A2)$, and $(A3)$ hold, and $\tilde{\mu}_k(\lambda, \tau),  \tilde{e}_k(\lambda, \tau)$, and $E_i(\lambda, \tau)$ are as above for fixed $\lambda, \tau$.
If $\delta \geq 0$, then
\beq
\tilde{\mu}_k(\lambda, \tau+\delta) = \tilde{\mu}_k(\lambda, \tau) + \delta \qquad \text{and}\qquad E_k(\lambda, \tau+ \delta) = E_k(\lambda, \tau).
\eeq
\elem

\bpf
Rearranging the $(a_{\lambda, \tau + \delta}\, , \, m)$-eigenequation for $\tilde{\mu}_k(\lambda, \tau +\delta)\, , \, \tilde{e}_k(\lambda, \tau + \delta)$ gives
\[
a_{\lambda, \tau}(\tilde{e}_k(\lambda, \tau+\delta)\, , \, v) 
\; =\;
 (\tilde{\mu}_k(\lambda, \tau + \delta) - \delta)\, m(\tilde{e}_k(\lambda, \tau + \delta)\, , \, v) \qquad \text{for all }v\in V,
\]
so  $\tilde{\mu}_k(\lambda, \tau + \delta) - \delta \; =\; \tilde{\mu}_i(\lambda, \tau)$ for some $i$.
Separately, adding $\delta\,  m(\tilde{e}_k(\lambda, \tau)\, , \, v)$ to both sides of the $(a_{\lambda, \tau}\, ,\, m)$-eigenequation for $\tilde{\mu}_k(\lambda, \tau)\, , \, \tilde{e}_k(\lambda, \tau)$ leads to
\[
a_{\lambda, \tau + \delta }(\tilde{e}_k(\lambda, \tau)\, , \, v) \; = \; (\tilde{\mu}_k(\lambda, \tau) + \delta)\, m(\tilde{e}_k(\lambda, \tau)\, , \, v) \qquad \text{for all }v\in V,
\]
so $\tilde{\mu}_k(\lambda, \tau) + \delta \; =\; \tilde{\mu}_j(\lambda, \tau + \delta)$ for some $j$.
These two relations give the desired results.
\epf

For the next result, using \eqref{e.edata} define the new values
\beq\label{e.muNoTilde}
\mu_k(\lambda, \tau)\; := \;  \tilde{\mu}_k(\lambda, \tau) -\tau \qquad \text{for each }k\in \N. 
\eeq 

\blem
Assume $(A1), (A2)$ and $(A3)$ hold.  
For fixed $\lambda\in \R$, the new values $\mu_k(\lambda, \tau)$ given in \eqref{e.muNoTilde} are independent of $\tau$.
\elem

\bpf
For $\delta >0$, the linear functional relation proved above gives
\[
\mu_k(\lambda, \tau + \delta) 
\; =\;
 \tilde{\mu}_k(\lambda, \tau + \delta) - (\tau + \delta) 
\; =\;
 \tilde{\mu}_k(\lambda, \tau) + \delta  -  (\tau + \delta)
\; =\;
 \tilde{\mu}_k(\lambda, \tau) - \tau \; =\; \mu_k(\lambda, \tau)
\]
as claimed.
\epf

Thus,  $\mu_k(\lambda, \tau)$ and $E_k(\lambda, \tau)$ are written simply as $\mu_k(\lambda)$ and $E_k(\lambda)$.
Although, the eigenspaces are $\tau$-invariant, the eigenvectors $\tilde{e}_k(\lambda, \tau)$ are, in general, not.
However, since for each $k\in \N$, $E_k(\lam,\gamma)$ is independent of $\gamma$, we may assume that $\tilde{e}_k(\lam, \tau+\delta)=c(\lam,\delta)\tilde{e}_k(\lam,\tau)$, for some constant $c(\lam, \delta)$. To obtain ``cononical'' spectral representations, we define the following:
\beq\label{e.canonicalE}
e_k(\lambda, \tau) \; := \; \sqrt{ \tilde{\mu}_k(\lambda, \tau)}\, \tilde{e}_k(\lambda, \tau).
\eeq
The next lemma shows these scaled eigenvectors satisfy certain norm properties, where 
\beq
\|u\|_m \; := \; \sqrt{ \mathcal{M}(u)} \; = \; \sqrt{ m(u, u)} \qquad \text{defined for all }u\in V,
\eeq 
is called the $m$-norm on $V$ and the $a_{\lambda\tau}$-norm is given above in \eqref{e.aLTnorm}.

\blem Assume $(A1), (A2)$, and $(A3)$ hold, and $\tilde{\mu}_k(\lambda, \tau),  \tilde{e}_k(\lambda, \tau)$, and $E_i(\lambda, \tau)$ are as above for fixed $\lambda, \tau$.
Then
\begin{itemize}
\item[$(i)$]  the $e_k(\lambda, \tau)$ are $m$-normalized: \; $ \|e_k(\lambda, \tau)\|_m \; = \; 1$.

\vspace{1em}
\item[$(ii)$] for $\delta >0$, \; $\|e_k(\lambda, \tau )\|_{a_{\lambda, \tau +\delta}} \; =\;  \sqrt{  \tilde{\mu}_k(\lambda, \tau) + \delta}$.
\end{itemize}
\elem

\bpf
The first assertions follows from 
\[
\|e_k(\lambda, \tau)\|_m^2 \; =\; \Msc( e_k(\lambda, \tau)) \; =\;\tilde{\mu}_k(\lambda, \tau)\Msc( \tilde{e}_k(\lambda, \tau)) \; =\; \mathscr{A}_{\lambda, \tau}(\tilde{e}_k(\lambda, \tau)) \; =\; 1,
\]
using the $(a_{\lambda, \tau}\, , \, m)$-eigenequation.  The same eigenequation also gives
\[
\|e_k(\lambda, \tau)\|_{a_{\lambda, \tau+\delta}}^2 
\; =\;
 \Asc_{\lambda, \tau+\delta}( e_k(\lambda, \tau)) 
\; =\;
\mathscr{A}_{\lambda, \tau}(e_k(\lambda, \tau)) + \delta\, \Msc( e_k(\lambda, \tau))
\; =\;
\tilde{\mu}_k(\lambda, \tau) + \delta
\] 
so the second assertion holds.
\epf

\begin{rem}
Since $\tilde{e}_k(\lambda, \tau +\delta)$ has $a_{\lambda, \tau + \delta}$-norm one,  part $(ii)$ of this lemma and equation \eqref{e.canonicalE} imply 
\beq\label{e.epsilonE}
\tilde{e}_k(\lambda, \tau + \delta) 
\; =\;
 \frac{  \sqrt{  \tilde{\mu}_k(\lambda, \tau)}    }{    \sqrt{  \tilde{\mu}_k(\lambda, \tau) + \delta }   } \; \tilde{e}_k(\lambda, \tau)
\eeq
\end{rem}

\blem Assume $(A1)$, $(A2)$, and $(A3)$ hold.
Then the $e_k(\lambda, \tau)$ given in \eqref{e.canonicalE} are independent of $\tau$.
\elem

\bpf
Multiplying equation \eqref{e.epsilonE} by $\sqrt{ \tilde{\mu}_k(\lambda, \tau) + \delta}$ gives
\[
e_k(\lambda, \tau + \delta) \; =\; \sqrt{ \tilde{\mu}_k(\lambda, \tau + \delta)}\, \tilde{e}_k(\lambda, \tau + \delta) \; =\; \sqrt{ \tilde{\mu}_k(\lambda, \tau)}\, \tilde{e}_k(\lambda, \tau) \;  =\; e_k(\lambda, \tau)
\]
and so the claim holds.
\epf

This justifies writing $e_k(\lambda)$ instead of $e_k(\lambda, \tau)$.
The pairs
\beq\label{e.canonical}
(\mu_k(\lambda)\, , \, e_k(\lambda))\qquad \text{with }k\in\N,
\end{equation}
are regarded as the {\it canonical spectral data} for the triple $(a, b, m)$.

\vspace{1em}
\section{\bf Two-parameter Fredholm Alternative}\label{s.LinearFredholm}

This and the next section treat the linear version of \eqref{E.Non1} stated as: 
for fixed $(\lambda, \mu)\in \R$ and  $\ell \in V^*$  consider the problem of finding $u$ satisfying
\beq\label{e.LnoR}
a(u,v)  - \lambda \, b(u,v) - \mu\, m(u,v) \;  \; = \;\; \ell(v) \qquad \text{for all }v\in V.
\eeq
This equation is called the linear {\it Fredholm equation} for $(a, b, m)$.
The result for the nonresonance case; that is, the case where $(\lam, \mu)\not\in\sigma(a,b,m)$, is the following.

\btm\label{E:NonSpec} Assume $(A1)$, $(A2)$, and $(A3)$ hold and suppose $(\lambda, \mu) \notin \sigma(a, b, m)$.  
Then $\mu\neq \mu_k(\lambda)$ for all $k$, and there is a unique solution $\hat{u}$ to problem \eqref{e.LnoR} which has the spectral representation
\beq 
\hat{u} \; = \; \sum_{k=1}^\infty   \frac{ 1 }{\mu_k(\lambda) - \mu}  \, \ell(e_k(\lambda)) \, e_k(\lambda).
\eeq 
\etm

The proof of this also yields the following estimates for the unique solution that are particularly useful in the applications.

\btm\label{E:NonSpec2} Assume $(A1)$, $(A2)$, and $(A3)$ hold and suppose $(\lambda, \mu)\notin \sigma(a, b, m)$.
Let $\tau>0$ be large enough so that $a_{\lambda \tau}$ in \eqref{e.aLT} is coercive, and denote by $\|\cdot \|_{a_{\lambda \tau}^*}$ the dual norm on $V^*$ when $V$ is equipped with the norm $\| u \|_{a_{\lambda, \tau}} := \sqrt{ a_{\lambda \tau}(u, u) }$. 
	\begin{itemize}
	\item[$(i)$] When $\mu < \mu_1(\lambda)$, then the unique solution $\hat{u}$ to \eqref{e.LnoR} satisfies the bound 
\beq\label{e.LinearBelowOne}
\|\hat{u}\|_{a_{\lambda\tau}} \; \leq \; \left(   \frac{\mu_1(\lambda) + \tau}{\mu_k(\lambda) -\mu}    \right) \, \|\ell\|_{a_{\lambda\tau}^*}
\eeq
	\item[$(ii)$]  When $\mu_{k_0}(\lambda) < \mu < \mu_{k_0+1}(\lambda)$, then the unique solution $\hat{u}$ to \eqref{e.LnoR} satisfies the bound
\beq\label{e.LinearBetweenK}
\|\hat{u}\|_{a_{\lambda\tau}} \; \; \leq \; \;
 \frac{(\mu + \tau)( \mu_{k_0+1}(\lambda) - \mu_1(\lambda)}{  (\mu_1(\lambda) + \tau) (\mu_{k_0+1}(\lambda) - \mu)}\,  \| P_{k_0}\hat{u} \|_{a_{\lambda \tau}} 
\; +\; 
 \left(  \frac{\mu_{k_0+1}(\lambda) + \tau}{ \mu_{k_0+1}(\lambda) - \mu} \right) \|\ell\|_{a_{\lambda\tau}^*} 
\eeq
where 
\beq 
P_{k_0}\hat{u} \;  :=  \; \sum_{k=1}^{k_0}\frac{\ell(e_k(\lambda))}{\mu_k(\lambda) - \mu} e_k(\lambda).
\eeq 
	\end{itemize}
\etm

\bpf The following argument yields the results for both theorems.

Rewrite  equation \eqref{e.LnoR} to get
\beq
a(u, v) - \lambda \, b(u, v) + \tau\, m(u, v) - (\mu + \tau)\, m(u, v)  \; \; = \; \; \ell(v) \qquad \text{for all }v\in V,
\eeq
where $\tau>0$ is large enough so that $a_{\lambda,\tau} := a - \lambda\, b + \tau\, m$ is coercive.
Rewritten once more this reads as
\beq\label{e.abmAuch}
a_{\lambda, \tau}(u, v) - (\mu + \tau)\, m(u, v) \; \; = \; \; \ell(v) \qquad \text{for all }v\in V.
\eeq
Since $\mu\neq \mu_k(\lambda)$ we also have $\mu + \tau \neq \tilde{\mu}_k(\lambda, \tau)$ for all $k$.
Then Theorem 10.1 of \cite{Au4} shows that the unique solution $\hat{u}$ of \eqref{e.abmAuch} is 
\[
\hat{u} \; = \; \sum_{k=1}^\infty \frac{\tilde{\mu}_k(\lambda, \tau)}{ \tilde{\mu}_k(\lambda, \tau) - (\mu + \tau)} \, \ell(\tilde{e}_k(\lambda, \tau)) \, \tilde{e}_k(\lambda, \tau)
\]
Splitting $\tilde{\mu}_k(\lambda, \tau)$ as $\sqrt{ \tilde{\mu}_k(\lambda, \tau)} \sqrt{ \tilde{\mu}_k(\lambda, \tau)} $ and using the linearity of $\ell$ and the scaling \eqref{e.canonicalE} for $e_k(\lambda)$, together with \eqref{e.edata} for $\mu_k(\lambda)$, leads to desired spectral representation for $\hat{u}$.

 Substituting $u=v=\hat{u}$ in \eqref{e.abmAuch} gives
\beq
\sum_{k=1}^\infty \left( 1 - \frac{\mu+\tau}{\tilde{\mu}_k(\lambda, \tau)}\right) \tilde{c}_k^2 \; \; =\; \; \ell(\hat{u})
\eeq
upon applying Parseval's equality $\|\hat{u}\|_{a_{\lambda, \tau}}^2 \; = \; \sum_{k=1}^\infty \tilde{c}_k^2$ with respect to the basis $\{\tilde{e}_k(\lambda, \tau)\}$.
When $\mu < \mu_1(\lambda)$, then  $\left( 1 - \frac{\mu+\tau}{\mu_1(\lambda) + \tau}\right) \leq \left( 1 - \frac{\mu+\tau}{ \mu_k(\lambda)+ \tau} \right)$ for all $k$.
Hence,
\[
\left( 1 - \frac{\mu+\tau}{\mu_1(\lambda) + \tau}\right) \sum_{k=1}^\infty \tilde{c}_k^2 \; \leq  \; \sum_{k=1}^\infty  \left( 1 - \frac{\mu+\tau}{ \mu_k(\lambda)+ \tau} \right) \tilde{c}_k^2 \; =\; \ell(\hat{u})
\]
The extreme sides of this relation lead to
\[
\left( 1 - \frac{\mu+\tau}{\mu_1(\lambda) + \tau}\right) \|\hat{u}\|_{a_{\lambda, \tau}}\; \leq  \; \;  \; \ell\left( \frac{\hat{u}}{ \|\hat{u}\|_{a_{\lambda, \tau} } } \right)
\]

Taking the supremum of $\ell(u)$ over all $\|u\|_{a_{\lambda, \tau}}=1$, and rearranging  gives the bound in \eqref{e.LinearBelowOne}.

When $\mu_{k_0}(\lambda)\, < \,  \mu  \,  < \, \mu_{k_0+1}(\lambda)$, then    $\left( 1 - \frac{\mu+\tau}{\mu_{k_0+1}(\lambda) + \tau}\right) \leq \left( 1 - \frac{\mu+\tau}{ \mu_k(\lambda)+ \tau} \right)$ for all $k\geq k_0+1$.  
This gives
\[
\left( 1 - \frac{\mu+\tau}{ \mu_{k_0+1}(\lambda) + \tau} \right) \sum_{k=1}^\infty \tilde{c}_k^2 \; \leq \; 
\left( 1 - \frac{\mu+\tau}{ \mu_{k_0+1}(\lambda) + \tau} \right) \sum_{k=1}^{k_0} \tilde{c}_k^2 \; + \; 
 \sum_{k=k_0+1}^\infty \left( 1 - \frac{\mu+\tau}{ \mu_k(\lambda) + \tau} \right)\tilde{c}_k^2.
\]
Adding and subtracting $\sum_{k=1}^{k_0}  \left( 1 - \frac{\mu+\tau}{ \mu_k(\lambda) + \tau} \right)\tilde{c}_k^2$ the left side becomes
\[
\left( 1 - \frac{\mu+\tau}{ \mu_{k_0+1}(\lambda) + \tau} \right) \sum_{k=1}^{k_0} \tilde{c}_k^2
 \; - \; 
 \sum_{k=1}^{k_0} \left( 1 - \frac{\mu+\tau}{ \mu_k(\lambda) + \tau} \right)\tilde{c}_k^2
 \; + \; 
 \sum_{k=1}^\infty \left( 1 - \frac{\mu+\tau}{ \mu_k(\lambda) + \tau} \right)\tilde{c}_k^2.
\]
This expression is majorized by
\[
\left( 1 - \frac{\mu+\tau}{ \mu_{k_0+1}(\lambda) + \tau} \right) \sum_{k=1}^{k_0} \tilde{c}_k^2
 \; - \; 
 \left( 1 - \frac{\mu+\tau}{ \mu_1(\lambda) + \tau} \right) \sum_{k=1}^{k_0} \tilde{c}_k^2
 \; + \; 
\ell(\hat{u}).
\]
since the series is equal to $\ell(\hat{u})$ and $-\left( 1 - \frac{\mu+\tau}{\mu_k(\lambda) + \tau}\right) \leq -\left( 1 - \frac{\mu+\tau}{\mu_1(\lambda) + \tau}\right)$ for all $k=1, \ldots k_0$.
Combining the two sums and simplifying thus shows that
\[
\left( 1 - \frac{\mu+\tau}{ \mu_{k_0+1}(\lambda) + \tau} \right) \sum_{k=1}^\infty \tilde{c}_k^2 \; \;  \leq \;  \; 
\frac{  (\mu+\tau)[ \mu_{k_0+1}(\lambda) - \mu_1(\lambda)  ] }{  (\mu_{k_0+1}(\lambda) + \tau)(\mu_1(\lambda) + \tau)  } \, \sum_{k=1}^{k_0} \tilde{c}_k^2 \; + \; \ell(\hat{u}).
\]
Rearrange this inequality to get
\[
\|\hat{u}\|_{a_{\lambda\tau}}^2  \; \; \leq \; \; \frac{  (\mu+\tau)[\mu_{k_0+1}(\lambda) - \mu_1(\lambda)]  }{  (\mu_1(\lambda)+\tau  )(  \mu_{k_0+1}(\lambda) - \mu)   } \|P_{k_0}\hat{u}\|_{a_{\lambda\tau}}^2 \; + \; \left(   \frac{ \mu_{k_0+1}(\lambda) + \tau  }{  \mu_{k_0+1}(\lambda) - \mu  }  \right)\, \ell(\hat{u}),
\]
where $\|P_{k_0}\hat{u}\|_{a_{\lambda\tau}}^2 = \sum_{k=1}^{k_0}\tilde{c}_k^2$ is the norm (squared) of the projection $P_{k_0}\hat{u}$.
Inequality \eqref{e.LinearBetweenK} follows from this by dividing by $\|\hat{u}\|_{a_{\lambda\tau}}$, using that $\frac{   \|P_{k_0}\hat{u}\|_{a_{\lambda\tau}}   }{ \|\hat{u}|_{a_{\lambda\tau}}  } \leq 1$ and taking the supremum over all $\|u\|_{a_{\lambda\tau}}=1$ to get the dual norm of $\ell$ with respect to the $a_{\lambda\tau}$ norm.
\epf

We finish this section with a Fredholm alternative in the case of resonance, that is, the case where $(\lambda, \mu)\in \sigma(a, b, m)$.
The result for the two-parameter Fredholm Equation \eqref{e.LnoR} is this case is stated in the next theorem, where for fixed $(\lambda, \mu)\in \sigma(a, b, m)$ the notation 
\beq
J_{\lambda, \mu} \;  :=  \; \{ k\in \N : \mu_k(\lambda) = \mu \} \qquad \text{and}\qquad E_{\lambda,\mu} \; := \; {\rm span} \{ e_k(\lambda) : k\in J_{\lambda,\mu} \}  
\eeq 
is used instead of the previous $E_k(\lambda)$ that denoted the distinct eigenspaces of $(a_{\lambda}\, , \, m)$; so $E_{\lambda,\mu} = E_k(\lambda)$ for some $k$.

\btm\label{E:ResFred}  Assume $(A1)$, $(A2)$, and $(A3)$ hold, and let $(\lambda, \mu)\in \sigma(a, b, m)$ and  $\ell \in V^*$ be fixed. 
Then $\mu  = \mu_{k_0}(\lambda)$ for some $k_0$ and 
\begin{itemize}
\item[$(i)$]\quad there are solutions $\hat{u}$ to \eqref{e.LnoR} if and only if $\ell(e) = 0$ for all $e\in E_{\lambda, \mu}$.
\item[$(ii)$]\quad the solution set $S_{\lambda,\mu}$ to \eqref{e.LnoR} is the affine subspace comprised of all $\hat{u}$ of the form 
\beq\label{e.LRsol}
\hat{u} \; =\; \sum_{ k\notin J_{\lambda,\mu}  } \frac{1}{\mu_k(\lambda) - \mu} \, \ell(e_k(\lambda))\, e_k(\lambda) \; + \; \hat{v}
 \qquad \text{where }\quad 
\hat{v}\in E_{\lambda, \mu}.
\eeq
\end{itemize}
\etm

\bpf
When $\ell(e) \neq 0$ for some $e\in E_{\lambda, \mu}$, then equation \eqref{e.LnoR} does not hold for $v= e$.
Now suppose $\ell(e)=0$ for all $e\in E_{\lambda, \mu}$.
Substituting $\hat{u}$ from \eqref{e.LRsol} into the left side of \eqref{e.LnoR} simplifies to
\[
a\left( \sum_{k\notin J_{\lambda,\mu}} \ell(e_k(\lambda))\, e_k(\lambda)\; , \; v\right) \;\; = \; \;\ell(v)
\]
which implies that $\hat{u}$ is a solution, so assertion $(i)$ holds.
Since this holds for any $\hat{v}\in E_{\lambda,\mu}$, hence any $\hat{u}$ as in \eqref{e.LRsol}, assertion $(ii)$ also holds.
\epf

The  special solution $\hat{u}_0$ obtained by taking $\hat{v}$ in \eqref{e.LRsol}  to be zero is often used in practice since then $\hat{u}$ has additional orthogonality structure.




\vspace{1em}
\section{\bf Nonresonance}\label{s.Nonresonance}

With Theorem \ref{E:NonSpec} and \ref{E:ResFred} in hand, we now turn our attention to the main interest of this paper, which is the study of the nonlinear bilinear form problem \eqref{E.Non1}. 
This problem is a nonlinear perturbation of \eqref{e.LnoR} and as such, the Fredholm alternatives developed in the previous section will play an important role in our analysis.  
For ease of reference, we remind the reader that we are now considering the problem of finding $u$ satisfying
\beq\label{e.nonresonance}
a(u, v) \; =\; \lambda\, b(u, m) + \mu\, m(u, v) + \varepsilon\, F(u, v) \qquad \text{for all }v\in V.
\eeq
This will be called the {\it nonlinear Fredholm equation} for $(a, b, m)$.
At the moment we will be focusing on the nonresonant case; that is, the case in which $(\lambda, \mu) \notin \sigma(a, b, m)$. 
Throughout,  $\varepsilon \in \R $ will be fixed, and we will suppose that $F$ is such that $F(u, \cdot) \in V^*$ for all $u\in V$.
We present two results in this context which give the existence of solutions to \eqref{e.nonresonance} under assumptions on the mapping $u\to F(u,\cdot)$ from $V$ to $V^*$.

\begin{thm}\label{T:Nonres}
Suppose that $(\lambda, \mu) \notin \sigma(a, b, m)$ and that $u\to F(u,\cdot)$ is continuously differentiable.  Then there exists a $\delta>0$ such that if $-\delta<\ve<\delta$, then \eqref{e.nonresonance} has a locally unique solution $u_\ve$. Further, the mapping $u:(-\delta, \delta)\to V$, defined by $u(\ve)=u_\ve$, is continuously differentiable with $\ds\lim_{\ve\to 0}u(\ve)=0$.
\end{thm}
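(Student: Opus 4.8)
The natural approach is the implicit function theorem on an appropriate Banach space. Define the map $\Phi: \R \times V \to V$ (or $\R \times V \to V^*$, depending on how one wants to set things up) by using the Riesz isomorphism associated with the coercive form $a_{\lambda,\tau}$ to convert the weak equation \eqref{e.nonresonance} into an operator equation $G(\ve, u) = 0$. Concretely, for fixed $\tau$ large enough that $a_{\lambda,\tau}$ is coercive, rewrite \eqref{e.nonresonance} as
\[
a_{\lambda,\tau}(u,v) - (\mu+\tau)\, m(u,v) - \ve\, F(u,v) \; = \; 0 \qquad \text{for all } v \in V.
\]
Since $a_{\lambda,\tau}$ is an equivalent inner product on $V$, the left-hand side is, for each $u$, a bounded linear functional of $v$, hence represented by a unique element $G(\ve,u) \in V$. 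One then checks that $G(0,0) = 0$ (the trivial solution of the homogeneous problem at $\ve = 0$, using that $\ell = 0$ there) and that $G$ is continuously differentiable in $(\ve,u)$: smoothness in $u$ comes from the hypothesis that $u \mapsto F(u,\cdot)$ is $C^1$ from $V$ to $V^*$ (composed with the Riesz map), and smoothness in $\ve$ is immediate since the dependence is affine.

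The key step is to verify that the partial derivative $D_u G(0,0): V \to V$ is a bounded invertible linear operator. Computing this derivative, $D_u G(0,0)$ is the Riesz representation (with respect to $a_{\lambda,\tau}$) of the bilinear form $(h,v) \mapsto a_{\lambda,\tau}(h,v) - (\mu+\tau)\, m(h,v) - 0$, since the $\ve F$ term drops out at $\ve = 0$. So $D_u G(0,0) h$ is precisely the solution operator appearing in Theorem \ref{E:NonSpec}: invertibility is exactly the statement that, because $(\lambda,\mu) \notin \sigma(a,b,m)$, the linear Fredholm equation \eqref{e.LnoR} has a unique solution for every right-hand side $\ell \in V^*$. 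That theorem, together with the spectral representation it provides, gives both injectivity and surjectivity; boundedness of the inverse follows from the estimates in Theorem \ref{E:NonSpec2} (or simply from the bounded-inverse theorem once bijectivity and boundedness of the linear map are known). Thus $D_u G(0,0)$ is an isomorphism of $V$.

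With these hypotheses verified, the implicit function theorem yields $\delta > 0$ and a $C^1$ map $\ve \mapsto u(\ve)$ on $(-\delta,\delta)$ with $u(0) = 0$ and $G(\ve, u(\ve)) = 0$, i.e. $u(\ve) = u_\ve$ solves \eqref{e.nonresonance}; the local uniqueness clause of the IFT gives that $u_\ve$ is the unique solution in a neighborhood of $0$ in $V$, and continuity gives $\lim_{\ve \to 0} u(\ve) = u(0) = 0$. The main obstacle — really the only substantive point — is the invertibility of $D_u G(0,0)$, but the two-parameter Fredholm alternative of Section \ref{s.LinearFredholm} was developed precisely to supply it; the rest is bookkeeping, namely confirming that the Riesz-transported map $G$ is genuinely $C^1$ jointly in $(\ve,u)$, which reduces to the hypothesis on $F$ plus the continuity and (weak) boundedness of $a_{\lambda,\tau}$ and $m$.
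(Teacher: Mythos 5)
Your proposal is correct and follows essentially the same route as the paper: both reduce \eqref{e.nonresonance} to a zero-finding problem $G(\ve,u)=0$ and apply the Implicit Function Theorem at $(0,0)$, with the nonresonance hypothesis $(\lambda,\mu)\notin\sigma(a,b,m)$ supplying invertibility of the linearization via Theorem \ref{E:NonSpec}. The only difference is cosmetic: the paper pre-applies the solution operator of the linear problem (written as the spectral sum $\sum_k (\mu_k(\lambda)-\mu)^{-1}\,\ve F(u,e_k(\lambda))\,e_k(\lambda)$) so that $D_uG(0,0)=I$ trivially, whereas you keep the linear part in place via the Riesz map and then invoke the Fredholm alternative plus the bounded inverse theorem to see that $D_uG(0,0)$ is an isomorphism --- both are valid, and your version arguably makes the $C^1$-regularity of $G$ slightly more transparent.
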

\begin{proof}
Define the map $G:\R\times V \rightarrow V$ by
\beq\label{E:Nonlres}
G(\varepsilon, u) \; \; = \; \; u \; - \; \sum_{k=1}^\infty \frac{1}{\mu_k(\lambda) - \mu} \varepsilon F(u, e_k(\lambda))\, e_k(\lambda)
\eeq
From the spectral representation of Theorem \ref{E:NonSpec}, the vector
\beq
w_u \; \; := \; \; \sum_{k=1}^\infty \frac{1}{\mu_k(\lambda) - \mu} \varepsilon F(u, e_k(\lambda)) \, e_k(\lambda)
\eeq
satisfies
\beq
a(w_u\, , \, v) \; = \; \lambda\, b(w_u\, , \, v) + \mu\, m(w_u\, , \, v) + \varepsilon F(u, v) \qquad \text{for all }v\in V.
\eeq
If $G(\varepsilon, u)=0$, then we get $u = w_u$ and this last eigenequation simplifies to equation \eqref{e.nonresonance}.
Thus, the solutions of \eqref{e.nonresonance} are precisely the zeros of $G$. 

Let $D_uF$ denote the derivative of $u\to F(u,\cdot)$.
Then
\[
D_uG(\varepsilon, u)[v] \; \; = \; \; v \; - \; \varepsilon \sum_{k=1}^\infty \frac{1}{\mu_k(\lambda) - \mu} D_uF(u, e_k(\lambda))[v]\, e_k(\lambda).
\]
It follows that $D_uG(0, 0) = I$, where $I$ is the identity operator on V.
Since $G(0, 0) =0$, the {\it Implicit Function Theorem} implies there exists a $\delta>0$ and a continuously differentiable mapping $u:(-\delta, \delta)\to V$ such that $G(\ve, u(\ve))=0$ for each $\ve\in (-\delta,\delta)$and $\ds\lim_{\ve\to 0}u(\ve)=0$.  This completes the proof.
\end{proof}

We finish this section with a result which requires slightly weaker assumptions on the nonlinearity $F$.  As a consequence we loose the differentiabilty of the solution $\ve\to u(\ve)$ that we had above.

\begin{thm}
Suppose that $(\lambda, \mu) \notin \sigma(a, b, m)$ and that that $u\to F(u,\cdot)$ is a Lipschitz mapping. Then there exists a $\delta>0$ such that if $-\delta<\ve<\delta$, then \eqref{e.nonresonance} has a unique solution $u_\ve$.
\end{thm}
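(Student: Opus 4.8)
The plan is to recast \eqref{e.nonresonance} as a fixed-point equation and invoke the Banach contraction principle, which needs only Lipschitz — rather than $C^1$ — dependence of the data; the argument otherwise parallels that of Theorem \ref{T:Nonres}. For fixed $\ve\in\R$ I would define $T_\ve:V\to V$ by
\[
T_\ve(u) \; := \; \sum_{k=1}^\infty \frac{\ve\, F(u, e_k(\lambda))}{\mu_k(\lambda) - \mu}\, e_k(\lambda).
\]
Since $(\lambda,\mu)\notin\sigma(a,b,m)$, Theorem \ref{E:NonSpec} guarantees that the series on the right is exactly the unique solution of the linear Fredholm equation \eqref{e.LnoR} with $\ell = \ve\, F(u,\cdot)\in V^*$; in particular $T_\ve$ is well defined with values in $V$, and (as in the proof of Theorem \ref{T:Nonres}) the vector $w_u := T_\ve(u)$ satisfies $a(w_u,v) = \lambda b(w_u,v) + \mu m(w_u,v) + \ve F(u,v)$ for all $v\in V$. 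Consequently $u$ solves \eqref{e.nonresonance} if and only if $u$ is a fixed point of $T_\ve$.

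The only substantive point is the boundedness of the linear solution operator $K:V^*\to V$, $\ell\mapsto\hat u$, where $\hat u$ solves \eqref{e.LnoR}. I would extract this from the spectral representation of Theorem \ref{E:NonSpec}: writing $\hat u$ in the $a_{\lambda,\tau}$-orthonormal basis $\{\tilde e_k(\lambda,\tau)\}$ and using Parseval gives
\[
\|\hat u\|_{a_{\lambda,\tau}}^2 \; = \; \sum_{k=1}^\infty \left( \frac{\mu_k(\lambda) + \tau}{\mu_k(\lambda) - \mu} \right)^2 \ell\bigl(\tilde e_k(\lambda,\tau)\bigr)^2 \; \le \; M^2\, \|\ell\|_{a_{\lambda,\tau}^*}^2,
\]
with $M := \sup_k \bigl| (\mu_k(\lambda)+\tau)/(\mu_k(\lambda)-\mu) \bigr| < \infty$; the supremum is finite because $\mu\neq\mu_k(\lambda)$ for every $k$ and $\mu_k(\lambda)=\tilde\mu_k(\lambda,\tau)-\tau\to\infty$, so the ratios tend to $1$. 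Because (A1) makes $\|\cdot\|_{a_{\lambda,\tau}}$ equivalent to $\|\cdot\|_V$ (hence $\|\cdot\|_{a_{\lambda,\tau}^*}$ equivalent to $\|\cdot\|_{V^*}$), this yields a constant $C_K$ with $\|K\ell\|_V\le C_K\,\|\ell\|_{V^*}$. Alternatively, boundedness of $K$ is immediate from the closed graph theorem, using the uniqueness in Theorem \ref{E:NonSpec} together with the continuity of $a$ and the weak continuity of $b$ and $m$ to see that the graph of $K$ is closed.

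Granting this, I would let $L$ be a Lipschitz constant for $u\mapsto F(u,\cdot)$ from $V$ to $V^*$. Linearity of $K$ gives $T_\ve(u)-T_\ve(w)=\ve\,K\bigl(F(u,\cdot)-F(w,\cdot)\bigr)$, whence
\[
\|T_\ve(u)-T_\ve(w)\|_V \;\le\; |\ve|\,C_K\,L\,\|u-w\|_V \qquad\text{for all } u,w\in V.
\]
Choose $\delta>0$ with $\delta\,C_K\,L<1$ (if $L=0$ the equation is linear and the conclusion follows directly from Theorem \ref{E:NonSpec}, so any $\delta$ works). For each $\ve\in(-\delta,\delta)$ the map $T_\ve$ is then a contraction of the complete space $V$, so the Banach fixed-point theorem produces a unique $u_\ve\in V$ with $T_\ve(u_\ve)=u_\ve$, i.e. a unique solution $u_\ve$ of \eqref{e.nonresonance}. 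I expect the boundedness of $K$ to be the only real obstacle; the rest is a routine contraction argument, with the one bit of care being the bookkeeping between the equivalent norms $\|\cdot\|_V$ and $\|\cdot\|_{a_{\lambda,\tau}}$ so that the Lipschitz hypothesis meshes with the estimate for $K$. Note that, unlike in Theorem \ref{T:Nonres}, this approach does not deliver differentiability (or even a rate) for $\ve\mapsto u_\ve$, since Lipschitz data does not support an application of the Implicit Function Theorem.
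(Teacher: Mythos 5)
Your proof is correct, and its overall strategy — recasting \eqref{e.nonresonance} as a fixed-point equation for the composition of the linear solution operator with $u\mapsto \ve F(u,\cdot)$ and applying the Banach contraction principle — is the same as the paper's. Where you differ is in how the contraction estimate is obtained, and your route is actually the more robust one. The paper estimates $\|H(\ve,u)-H(\ve,w)\|_m^2$ termwise: it expands in the $m$-orthonormal family $\{e_k(\lambda)\}$ and bounds each coefficient $|F(u,e_k(\lambda))-F(w,e_k(\lambda))|$ by the Lipschitz constant times $\|u-w\|_m$, arriving at a factor $\ve^2\kappa\sum_{k}|\mu_k(\lambda)-\mu|^{-2}$; this implicitly requires the series $\sum_k|\mu_k(\lambda)-\mu|^{-2}$ to converge, which is a genuine additional spectral growth condition not guaranteed by (A1)--(A3) alone. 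You instead bound the solution operator $K:\ell\mapsto\hat u$ in the $a_{\lambda,\tau}$-norm via Parseval, keeping the full quantity $\sum_k\ell(\tilde e_k(\lambda,\tau))^2=\|\ell\|_{a_{\lambda,\tau}^*}^2$ intact and extracting only the uniform factor $M=\sup_k|(\mu_k(\lambda)+\tau)/(\mu_k(\lambda)-\mu)|$, which is finite simply because $\mu\neq\mu_k(\lambda)$ for all $k$ and $\mu_k(\lambda)\to\infty$. This buys you a contraction constant $|\ve|C_KL$ with no summability hypothesis, and it interfaces cleanly with the natural reading of the Lipschitz hypothesis as a map from $V$ to $V^*$ (the paper instead takes Lipschitz with respect to the $m$-norm, which is a different and not obviously comparable assumption). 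Your closing remarks — that the fixed point is globally unique on $V$, and that no differentiability of $\ve\mapsto u_\ve$ is obtained — are both accurate and consistent with the statement.
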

\begin{proof}
Suppose that the mapping $u\to F(u,\cdot)$ is Lipschitz, with Lipschitz constant $\kappa$ with respect to the $m$-norm. Define 
\beq
H(\varepsilon, u ) \; \; = \; \; \varepsilon \sum_{k=1}^\infty \frac{1}{\mu_k(\lambda) - \mu} F(u, e_k(\lambda))\, e_k(\lambda).
\eeq
We then have
\begin{align*}
\| H(\varepsilon, u) - H(\varepsilon, w)\|_m^2 \; \; 
= & \; \; \varepsilon^2 \left\| \sum_{k=1}^\infty \frac{1}{ \mu_k(\lambda) - \mu } \left( F(u, e_k(\lambda)) \; - \; F(w, e_k(\lambda)) \right)\, e_k(\lambda)  \right\|_m^2\\
= & \; \; \varepsilon^2\sum_{k=1}^\infty  \frac{1}{ |\mu_k(\lambda) - \mu|^2 } \| F(u, e_k(\lambda)) \; - \; F(w, e_k(\lambda)) \|_m^2 \\
\leq & \; \; \varepsilon^2 \sum_{k=1}^\infty \frac{1}{|\mu_k(\lambda) - \mu|^2}\, \kappa\, \|u - w\|_m^2
\end{align*}
which is a contraction for small enough $\varepsilon >0$. The result now follows from the {\it Contraction Mapping Theorem}.
\end{proof}

\vspace{1em}
\section{\bf Resonance}\label{s.Resonance}

In this section we continue our study of problem \eqref{e.nonresonance}; that is, we are again interested in finding elements $u\in V$ such that 
\beqq
a(u, v) \; =\; \lambda\, b(u, m) + \mu\, m(u, v) + \varepsilon\, F(u, v) \qquad \text{for all }v\in V.
\eeqq
Our focus in this section, however, will be on the case of resonance; that is, the case in which $(\lambda, \mu) \in  \sigma(a, b, m)$. The solvability of \eqref{e.nonresonance} in this case of resonance is more delicate and requires a more careful analysis than was needed in the nonresonant case. Note, in particular, that the mapping $G$ defined in \eqref{E:Nonlres} is no longer well-defined.  Thus, analysis of \eqref{e.nonresonance} in this resonant case must proceed along a different route. The results we establish provide a substantial generalization of similar ideas found in \cite{Lewis,Mar2,Mar3}.

As before, $\varepsilon \in \R$ will be fixed, and we will suppose that $F$ is such that $F(u, \cdot) \in V^*$ for all $u\in V$.  
We will again assume that the mapping $u\to F(u,\cdot)$ is continuously differentiable, and in order to simplify the statement of the theorem below, the notation
\[
J_{\lambda, \mu} \; := \; \{k\in \N :  \mu_k(\lambda)=\mu\} \qquad \text{and}\qquad E_{\lambda,\mu} \; := \; \text{span}\{e_k(\lambda) :  k\in J_{\lambda,\mu}\}
\]
that was used, for fixed $(\lambda, \mu)\in \sigma(a,b,m)$, in the linear two-parameter Fredholm alternative at resonance will also be used here.
Finally, for every $w\in V$, define $\Lambda_w:V\to E_{\lam, \mu}$ by
\beqq
\Lambda_w(v)\; := \; \sum_{k\in J_{\lam,\mu}}D_uF(w,e_k(\lam))[v]e_k(\lam).
\eeqq

\begin{thm}\label{T:Res}
Suppose that $(\lambda,\mu)\in \sigma(a, b, m)$ and that $u\to F(u,\cdot)$ is continuously differentiable. Suppose further that there exists $u_0\in E_{\lam,\mu}$ with $F(u_0, e_k(\lam))=0$ for each $k\in J_{\lam, \mu}$. If the restriction of $\Lambda_{u_0}$ to $E_{\lam, \mu}$ is an injection, then \eqref{e.nonresonance} has a solution for small $\ve$.
\end{thm}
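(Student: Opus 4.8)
The plan is to mimic the nonresonance argument, but because $G$ from \eqref{E:Nonlres} is undefined at resonance we must split the equation \eqref{e.nonresonance} into its component on $E_{\lambda,\mu}$ and its component on the complementary closed subspace, solve the complementary piece first by a uniform contraction/implicit-function argument, and then handle the finite-dimensional piece on $E_{\lambda,\mu}$ by a degree- or implicit-function-type argument that exploits the injectivity of $\Lambda_{u_0}|_{E_{\lambda,\mu}}$. Writing $P$ for the $m$-orthogonal projection of $V$ onto $E_{\lambda,\mu}$ (a finite-dimensional space since eigenvalues of $(a_{\lambda,\tau},m)$ have finite multiplicity) and $Q=I-P$, decompose the unknown as $u=u_1+w$ with $u_1=Pu\in E_{\lambda,\mu}$ and $w=Qu$. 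By Theorem \ref{E:ResFred}, $a(u,v)-\lambda b(u,v)-\mu m(u,v)=\varepsilon F(u,v)$ is solvable for the $Q$-part precisely when one can absorb $\varepsilon F$, and the $Q$-part has the explicit form
\beq\label{e.ResW}
w \; = \; \varepsilon \sum_{k\notin J_{\lambda,\mu}} \frac{1}{\mu_k(\lambda) - \mu}\, F(u_1 + w, e_k(\lambda))\, e_k(\lambda),
\eeq
while the $P$-part reduces to the finite system of bifurcation equations
\beq\label{e.ResBif}
F(u_1 + w, e_k(\lambda)) \; = \; 0 \qquad \text{for all } k\in J_{\lambda,\mu}.
\eeq

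First I would solve \eqref{e.ResW}: for fixed small $\varepsilon$ and fixed $u_1$ in a bounded neighborhood of $u_0$ in $E_{\lambda,\mu}$, the right-hand side of \eqref{e.ResW} is, in the $m$-norm, a contraction in $w$ with constant $O(\varepsilon)$ (the series converges because $\mu_k(\lambda)\to\infty$ keeps $|\mu_k(\lambda)-\mu|$ bounded below for $k\notin J_{\lambda,\mu}$, exactly as in Theorem 5.2's estimate; continuous differentiability of $u\mapsto F(u,\cdot)$ gives the needed local Lipschitz bound). This produces a unique solution $w=w(\varepsilon,u_1)$, continuously differentiable in $(\varepsilon,u_1)$, with $w(0,u_1)=0$ and $w(\varepsilon,u_1)=O(\varepsilon)$ uniformly; in particular $D_{u_1}w(0,u_0)=0$.

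Next I would substitute $w(\varepsilon,u_1)$ into \eqref{e.ResBif} to obtain a finite-dimensional map $\Phi:(-\delta,\delta)\times U\to E_{\lambda,\mu}$, $\Phi(\varepsilon,u_1):=\sum_{k\in J_{\lambda,\mu}}F(u_1+w(\varepsilon,u_1),e_k(\lambda))\,e_k(\lambda)$, whose zeros are exactly the solutions of \eqref{e.nonresonance} near $u_0$. Here $\Phi(0,u_1)=\sum_{k\in J_{\lambda,\mu}}F(u_1,e_k(\lambda))e_k(\lambda)$, which vanishes at $u_1=u_0$ by hypothesis, and since $D_{u_1}w(0,u_0)=0$ the partial derivative is $D_{u_1}\Phi(0,u_0)=\Lambda_{u_0}|_{E_{\lambda,\mu}}$, which is injective, hence (being a linear map on a finite-dimensional space) an isomorphism of $E_{\lambda,\mu}$ onto itself. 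The Implicit Function Theorem then yields, for small $\varepsilon$, a solution $u_1=u_1(\varepsilon)$ of $\Phi(\varepsilon,u_1)=0$ with $u_1(0)=u_0$, and $u(\varepsilon):=u_1(\varepsilon)+w(\varepsilon,u_1(\varepsilon))$ solves \eqref{e.nonresonance}.

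The main obstacle is the bookkeeping in the Lyapunov–Schmidt step: one must verify carefully that the reduced map $\Phi$ is $C^1$ jointly in $(\varepsilon,u_1)$ — which requires that the series defining $w$ and its derivatives converge in the $V$-norm (not merely the $m$-norm) so that the implicit-function machinery applies in the right topology — and that the chain rule really produces $D_{u_1}\Phi(0,u_0)=\Lambda_{u_0}$ with the $w$-contribution dropping out. Promoting the $m$-norm estimates of \eqref{e.ResW} to $V$-norm (equivalently $a_{\lambda,\tau}$-norm) estimates is where the coercivity assumption (A1) and the eigenvalue growth do the work, paralleling the proof of Theorem \ref{E:NonSpec2}; once that is in place, the rest is a routine application of the Implicit Function Theorem.
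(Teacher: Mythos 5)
Your proposal is correct in substance, and it reaches the same coupled system of equations as the paper, but it solves that system by a different mechanism. The paper packages both the auxiliary equation on the complement of $E_{\lambda,\mu}$ and the bifurcation equations $F(u,e_k(\lambda))=0$, $k\in J_{\lambda,\mu}$, into the zero set of a single map $G:\R\times V\to V$, verifies by direct computation that $D_uG(0,u_0)$ is a bijection of $V$ (injectivity from the orthogonality of $\sum_{k\notin J_{\lambda,\mu}}m(v,e_k)e_k$ and $\Lambda_{u_0}(v)$ together with the injectivity of $\Lambda_{u_0}|_{E_{\lambda,\mu}}$; surjectivity by an explicit preimage construction), and then applies the Implicit Function Theorem once, with the Open Mapping Theorem supplying continuity of the inverse. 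You instead carry out a classical two-step Lyapunov--Schmidt reduction: a uniform contraction in $\varepsilon$ solves the infinite-dimensional auxiliary equation for $w(\varepsilon,u_1)$, and the finite-dimensional Implicit Function Theorem, with $D_{u_1}\Phi(0,u_0)=\Lambda_{u_0}|_{E_{\lambda,\mu}}$ an isomorphism, handles the bifurcation equations. Your route makes the role of the hypothesis on $\Lambda_{u_0}$ maximally transparent (it is exactly the invertibility of the reduced linearization) and isolates where only local Lipschitz control of $F$ is needed; the paper's route is shorter once the bijectivity computation is done and immediately yields a $C^1$ solution branch $\varepsilon\mapsto u_0(\varepsilon)$ without tracking two nested parameter dependences. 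The one place you should be careful, which you correctly flag, is running the contraction and the $C^1$ parameter dependence in the $V$-norm (equivalently the $a_{\lambda,\tau}$-norm): since $\|e_k(\lambda)\|_{a_{\lambda,\tau}}^2=\mu_k(\lambda)+\tau$ while $|\mu_k(\lambda)-\mu|$ only grows like $\mu_k(\lambda)$, the factor $\frac{\mu_k(\lambda)+\tau}{|\mu_k(\lambda)-\mu|}$ is bounded rather than small, so the series converges in $V$ and the needed Lipschitz estimate holds; this is the same bookkeeping as in the proof of Theorem \ref{E:NonSpec2}.
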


\begin{proof}
In this resonance case, define the map $G:\R\times V \rightarrow V$ by
\beq
G(\varepsilon, u) \; \; = \; \; u \; - \; \sum_{k\in J_{\lambda, \mu}} [m(u, e_k(\lambda)) - F(u, e_k(\lambda))]e_k(\lambda) - \varepsilon \sum_{k\notin J_{\lambda, \mu}} \frac{1}{\mu_k(\lambda) - \mu}  F(u, e_k(\lambda))\, e_k(\lambda).
\eeq

From the spectral representation, Theorem \ref{E:ResFred}, the vector 
\beq
w_u \; = \; \sum_{k\in J_{\lambda, \mu}} [ m(u, e_k(\lambda)) - F(u, e_k(\lambda))]\, e_k(\lambda) \; + \; \varepsilon \sum_{k\notin J_{\lambda, \mu}} \frac{1}{\mu_k(\lambda) - \mu} F(u, e_k(\lambda))\, e_k(\lambda)
\eeq
satisfies
\[
a(w_u\, , \, v) \; = \; \lambda\, b(w_u\, , \, v) + \mu\, m(w_u\, , \, v) + \varepsilon F(u, v) \qquad \text{for all }v\in V
\]
if and only if
\[
F(u, e_k(\lambda)) \; =\; 0 \qquad \text{for all }k\in J_{\lambda, \mu}.
\]
However, if $G(\varepsilon, u) =0$, then $u=w_u$ and from the uniqueness of the orthogonal decomposition
\[
u\; =\; \sum_{k\in J_{\lambda, \mu}} m(u, e_k(\lambda))\, e_k(\lambda) \; + \; \sum_{k\notin J_{\lambda, \mu}} m(u, e_k(\lambda))\, e_k(\lambda)
\]
we get that
\[
\sum_{k\in J_{\lambda, \mu}} F(u, e_k(\lambda)) \, e_k(\lambda) \; \; = \; \; 0 \qquad \text{or that}\qquad F(u, e_k(\lambda)) \; = \; 0 \quad \forall k\in J_{\lambda, \mu},
\]
so that the zeros of $G$ correspond to the solutions of \eqref{e.nonresonance}.

Now suppose $u_0        \in E_{\lam,\mu}$ and that $F(u_0, e_k(\lambda)) =0$ for all $k\in J_{\lambda, \mu}$.
Under this assumption, $G(0, u_0) \; = \; 0$.
Furthermore, 
\begin{align*}
D_uG(\varepsilon, u)[v] \; = \; v \;  - \;  \sum_{k\in J_{\lambda, \mu}} m(v,e_k(\lambda))e_k(\lambda) \; - & \;  \sum_{k\in J_{\lambda, \mu}} D_uF(u,e_k(\lam))[v]e_k(\lambda) \\
 -\;  &  \varepsilon \sum_{k\notin J_{\lambda, \mu}} \frac{1}{\mu_k(\lambda) - \mu} D_uF(u, e_k(\lambda))[v]e_k(\lambda)
\end{align*}
so that
\begin{align*}
D_uG(0, u_0)[v] \; =& \; v \, - \, \sum_{k\in J_{\lambda, \mu}} m(v,e_k(\lambda)) e_k(\lambda) \; - \; \sum_{k\in J_{\lambda, \mu}} D_uF(u_0, e_k(\lambda))[v]e_k(\lambda) \\
=& \; \sum_{k\notin J_{\lambda, \mu}} m(v, e_k(\lambda))e_k(\lambda) \; -\; \sum_{k\in J_{\lambda,\mu}} D_uF(u_0, e_k(\lambda))[v]e_k(\lambda)\\
=&\sum_{k\notin J_{\lambda, \mu}} m(v, e_k(\lambda))e_k(\lambda)\;-\; \Lambda_{u_0}(v).
\end{align*}

Since
$\sum_{k\notin J_{\lambda, \mu}} m(v, e_k(\lambda))\, e_k(\lambda)$ and $\Lambda_{u_0}(v)$ are orthogonal, it follows that if $D_uG(0, u_0)[v] \; =\; 0$, then
\[
\sum_{k\notin J_{\lambda, \mu}} m(v, e_k(\lambda))\, e_k(\lambda) \; = \; 0 \qquad \text{and}\qquad \Lambda_{u_0}(v)\; = \; 0.
\]
The first equation implies $v\in E_{\lam,\mu}$, but then the second implies $v=0$, since $\Lambda_{u_0}$ is injective on $E_{\lam,\mu}$.  It follows that $D_uG(0,u_0)$ is injective.

We now show that $D_uG(0,u_0)$ is a surjection. To see this, let $h\in V$ and define 
\[
\begin{split}
v_1 \; &:= \; \sum_{k\in J_{\lambda, \mu}} \left(   m(h, e_k(\lambda))  \;+\;  D_uF(u_0, e_k(\lambda)) \left[  \sum_{k\notin J_{\lambda,\mu}} m(h, e_k(\lambda))e_k(\lambda)  \right]  \right)\, e_k(\lambda)\\&=\sum_{k\in J_{\lam,\mu}}m(h,e_k(\lam))e_k(\lam)+\Lambda_{u_0}\left(\sum_{k\not\in J_{\lambda, \mu}} m(h, e_k(\lam))\, e_k(\lam)\right).
\end{split}
\]
Since $E_{\lam,\mu}$ is finite dimensional and $\Lambda_{{u_0} \big|_{E_{\lam,\mu}}}$ is injective, it is actually a bijection. Thus, there is a unique $v_0\in E_{\lam, \mu}$ with $\Lambda_{u_0}(v_0) = v_1$. 
Define
\[
v_h \; :=\; \sum_{k\notin J_{\lambda, \mu}} m(h, e_k(\lambda))\, e_k(\lambda) \; -\; v_0.
\]
From the following calculation, suppressing the dependence on $\lambda$, we get 
\begin{align*}
D_uG(0, u_0)[v_h] \; =& \; \sum_{k\notin J_{\lambda, \mu}} m(v_h, e_k)\, e_k \; - \; \Lambda_{u_0}(v_h)\\
=& \; \sum_{k\notin J_{\lambda, \mu}} m(h, e_k)\, e_k \; - \; \Lambda_{u_0}\left(   \sum_{k\not\in J_{\lambda, \mu}} m(h, e_k)\, e_k   \right) \; + \; \Lambda_{u_0}(v_0) \\
=&\sum_{k\notin J_{\lambda, \mu}} m(h, e_k)\, e_k \; - \; \Lambda_{u_0}\left(   \sum_{k\not\in J_{\lambda, \mu}} m(h, e_k)\, e_k   \right) \; + \; v_1 \\
=& \; \sum_{k\notin J_{\lambda, \mu}} m(h, e_k)\, e_k \; + \; \sum_{k\in J_{\lambda, \mu}} m(h, e_k)\, e_k \\
=& \; \;  h,
\end{align*}
so that $D_uG(0, u_0)$ is surjective.
The {\it Open Mapping Theorem} now gives that $D_uG(0, u_0)$ has a continuous inverse.
Therefore, by the {\em Implicit Function Theorem}, solutions to \eqref{e.nonresonance} exist for small enough $\ve$. This completes the proof.
\end{proof}

\begin{rem}
If we refer to the solutions obtained in Theorem \ref{T:Res} as $u_0(\ve)$, then the mapping $\ve\to u_0(\ve)$ is again locally unique, continuously differentiable,  and we have $\ds\lim_{\ve \to 0}u_0(\ve)=u_0$.
\end{rem}

\vspace{1em}
\section{Example}
\subsection{\bf Steklov-Robin eigenproblems and Steklov-Robin Fredholm equations}\label{s.SRFE}

To exemplify our abstract results, this section studies the (weak) solvability of the following elliptic equation with specific nonlinear reaction terms $f, g$ stated below:

\beq\label{e.SRN}
\begin{cases} 
-{\rm div}(A\nabla u)\, + \, c(x)\,u \; = \; \mu\, m_0(x)\, u \, + \, \varepsilon \,f(x, u) \qquad \text{in }\Omega \\
 \\
 (A\nabla u)\cdot \nu \, + \, b_c(x)\, u \; = \; \lambda\, b_0(x)\, u \, + \, \varepsilon\, g(x,u) \qquad \text{on }\partial\Omega 
\end{cases}.
\eeq
When $\varepsilon \in \R$ is zero, this is called a {\it Steklov-Robin eigenproblem} on $\Omega$, and $(\lambda, \mu)\in \R^2$ is said to be a {\it Steklov-Robin eigenpair} when \eqref{e.SRN} has a nontrivial solution.
When $\varepsilon \neq 0$, equation \eqref{e.SRN} will be called a {\it Steklov-Robin Fredholm equation}.

Here, $\Omega \subset \R^N$ is a bounded domain with boundary $\partial \Omega$ of class $C^{0,1}$,  $\nu$ is the outward unit normal to $\partial \Omega$ so that $(A\nabla u)\cdot \nu$ is the outward unit conormal derivative and the given data of the equation is assumed to satisfy:\\

\noindent
{\bf (C1):}\quad $A(x):=(a_{ij}(x))$ is a real symmetric matrix whose components are bounded Lebesgue-measurable functions on $\Omega$ and there are constants $0< \kappa_0 \leq  \kappa_1$ satisfying
\[
\kappa_0|\xi|^2 \; \leq \langle \, A(x)\xi\, , \, \xi\, \rangle \; \leq \; \kappa_1|\xi|^2 \qquad \text{for all }\xi\in \R^N\, ,\; x\in \Omega;
\]

\noindent 
 {\bf (C2):}\quad $c\geq 0$ on $\Omega$ and $c\in L^p(\Omega)$ for some $p\geq N/2$ when $N\geq 3$, or $p>1$ when $N=2$, where $L^p(\Omega)$ are the usual Lebesgue spaces on $\Omega$;
 
 \noindent 
 {\bf (C3):}\quad $b_c\geq 0$ on $\partial\Omega$ with $b\in L^{\infty}(\partial\Omega)$ and $c$ and $b_c$ satisfy the compatibility condition:
 \[
 \int_\Omega c\, dx \, + \, \int_{\partial\Omega} b_c\, d\sigma \; \; > \; \; 0,
 \]
 where $\sigma$ is Hausdorff measure on $\partial\Omega$;
 
 \noindent  
 {\bf (C4):}\quad $m_0>0$ on $\Omega$ with $m_0\in L^q(\Omega)$ for some $q>N/2$, and satisfies
 \[
 \int_\Omega m_0\, u^2\, dx \; > \; 0 \qquad \text{for all non-zero } u\in H^1(\Omega),
 \]
 where $H^1(\Omega)$ denotes the usual real Sobolev space of functions on $\Omega$;
 
 \noindent 
 {\bf (C5):}\quad $b_0\in L^\infty(\partial\Omega)$ and $b_0$ may be sign-changing on $\partial\Omega$.\\

 \noindent 
 The specific nonlinearities considered in this example are constructed as follows:\\
 
 \noindent
 {\bf (Cf):}\quad  Let $f_0\in L^2(\Omega)$ be such that $\int_{\Omega} f_0(x)\, dx >0$, and for $2< p < \infty$ define
\[
\begin{cases} 
\ell_f(u) \;  :=  \;\int_{\Omega} f_0\, u\, dx \\
   \\
f(x,u) \;  := \;  |\ell_f(u)|^{p-2}\ell_f(u)f_0(x) \qquad \text{for }u\in \Hone\text{ and } x\in \Omega. 
\end{cases} 
\]
 
 \noindent 
 {\bf (Cg):}\quad Let $g_0\in L^2(\partial\Omega)$ be such that $\int_{\partial\Omega} g_0(x)\, d\sigma >0$, and for $2< p < \infty$ define
\[
\begin{cases} 
\ell_g(u) \; := \;  \int_{\partial\Omega} g_0\,u \, d\sigma\\
   \\
g(x,u) \; :=\;  |\ell_g(u)|^{p-2}\ell_g(u)g_0(x) \qquad \text{for }u\in \Hone \text{ and } x\in 
\partial\Omega.
\end{cases} 
\]

 \begin{rem}
When similar operators $f, g$  in (Cf) and (Cg) are taken to be (appropriately) defined on the Banach-Sobolev space $W^{1, p}(\Omega)$, then the analysis of Auchmuty \cite{Au05} leads to very general versions of Poincar\'{e}- and Friedrichs-type inequalities.  
So these $f, g$ operators are of importance in the analysis of partial differential equations on Sobolev spaces.\\
 \end{rem}
 
 \begin{rem} 
 Readers interested in more general nonlinearities (in the interior and on the boundary equations) may consult Mavinga and Nkashama \cite{MN10}, where $f, g$ are assumed to satisfy: 
 
 \noindent
 {\bf (Cf'):}\quad $f\in C(\overline{\Omega}\times \R)$ and there are constants $b_1, b_2>0$ such that
 \[
 |f(x,u)|\; \leq \; b_1 + b_2|u|^s \qquad \text{with } 0\leq s< \frac{N+2}{N-2};
 \]
 
 \noindent
 {\bf (Cg'):}\quad $g\in C(\overline{\Omega}\times \R)$ and there are constants $a_1, a_2>0$ such that
 \[
 |g(x,u)| \; \leq \; a_1 + a_2|u|^s \qquad \text{with }0\leq s< \frac{N}{N-2}.
 \]
 The functional on $\Hone$ associated with (Cf'),(Cg') is $F(u,v) :=\int_\Omega f(x,u)\, v\, dx + \int_{\partial\Omega} g(x,u)\, v\, d\sigma$, and the map $u\mapsto F(u, \cdot)$ is shown in \cite{MN10} to be continuous from $\Hone$ to its dual. 
 A nice detailed proof that also argues the boundary term is given in Lemma 4.2 of \cite{MN10}.
 It is worth noting that in \cite{MN10} the nonlinearity $F$ interacts with ``eigenvalue-lines'' and here the focus is on how $F$ interacts with eigencurves.
 \end{rem}

 \begin{rem}
A simple one-dimensional case in which the condition\\
\noindent 
{\bf (C7):}  \quad {\it The map $F:\Hone \rightarrow (\Hone)^*$, as in \eqref{e.SRFv2} below, is Frech\'{e}t differentiable.}\\
holds for quite general $F$ is given in the Appendix in Section \ref{s.appendix} below; the higher dimensional case with much more general $F$ requires its own detailed careful analysis as in \cite{MN10} referenced above.
In this paper, attention is given to $f, g$ satisfying (Cf) and (Cg).
 \end{rem}

When (C1) - (C5) above hold, the bilinear forms on $\Hone$ associated with \eqref{e.SRN} are
\beq\label{e.SRabm}
\begin{cases} 
a(u,v)\; := \; \int_\Omega [ (A\nabla u)\cdot \nabla v \, + \, c\, u\, v]\, dx \, + \, \int_{\partial\Omega} b_c\, u\, v\, d\sigma, \\
  \\
 b(u, v)\; := \; \int_{\partial\Omega} b_0\, u\, v\, d\sigma,  \\
   \\
 m(u, v) \; := \; \int_\Omega m_0\, u\, v\, dx.
\end{cases} 
\eeq 
The assumptions on the data $A, b_0, b_c,c, m_0$ imply that these forms, respectively, satisfy assumptions (A1), (A2), (A3) of Section \ref{s.definitions}.  

When (Cf) and (Cg) hold, the map $F$ on $\Hone$ associated with the interior and boundary nonlinear terms of equation \eqref{e.SRN} is
\beq\label{e.SRFv2}
F(u, v)\; := \; \int_\Omega f(x,u)\, v\, dx \;  + \; \int_{\partial\Omega} g(x,u)\, v\, d\sigma \qquad \text{with } u, v\in \Hone. 
\eeq

With these bilinear and nonlinear forms defined, this section studies the following {\it nonlinear elliptic boundary-value problem}: for fixed $\lambda, \mu, \varepsilon \in \R $, find nontrivial $u\in \Hone$ satisfying
\beq\label{e.weakSRN}
a(u, v) \; = \; \lambda \, b(u, v) \, + \, \mu\, m(u, v) \, + \, \varepsilon\, F(u, v) \qquad \text{for all }v\in \Hone,
\eeq 
where $a, b, m$ are given by \eqref{e.SRabm} and $F$ is given by \eqref{e.SRFv2}.

\subsection{Linear Steklov-Robin Fredholm Alternative} 
When $\varepsilon =0$, equation \eqref{e.weakSRN} is called a {\it Steklov-Robin eigenvalue problem}.
The {\it variational eigencurves} $\lambda \mapsto \mu_k(\lambda)$ for this two-parameter bilinear eigenproblem are obtained in Section 10 of \cite{RR19}.
That section also provides regularity, orthogonality, asymptotic, and geometrical results for these eigencurves. 
Then Section \ref{s.canonicalData} above yields, for fixed $\lambda\in\R$, the {\it canonical} spectral data
\beq 
(\mu_k(\lambda)\, , \, e_k(\lambda)) \qquad \text{with }k\in \N
\eeq 
that satisfies, for all $v\in \Hone$, the following identity:
\beq\label{e.SReigenequation}
\int_\Omega [A\nabla e_k(\lambda)\cdot \nabla v + c\, e_k(\lambda)\, v\, - \mu_k(\lambda)\, m_0\, e_k(\lambda)\, v ]\, dx \; + \; \int_{\partial\Omega} [b_c\, e_k(\lambda)\, v\, - \, \lambda b_0\, e_k(\lambda)\, v ] \, d\sigma \; \; = \; \; 0.
\eeq 

The {\it linear Steklov-Robin Fredholm equation}, for fixed $(\lambda, \mu)\in \R^2$, $\ell \in (\Hone)^*$, and whose solution $u$ is sought in $\Hone$,  is given by
\beq\label{e.AppABM}
\int_\Omega [A\nabla u\cdot \nabla v + c\, u\, v\, - \mu_k(\lambda)\, m_0\, u\, v ]\, dx  +  \int_{\partial\Omega} [b_c\, u \, v\, - \, \lambda b_0\, u\, v ] \, d\sigma  \; = \; \ell(v) 
\eeq  
and is to be satisfied for all $v\in \Hone$.

The nonresonance alternative for the linear Steklov-Robin Fredholm equation is:

\btm 
Assume $(C1)$-$(C5)$, hold, and let $(\lambda, \mu) \notin \sigma(a, b, m)$ and $\ell \in (\Hone)^*$ be fixed.  
Then $\mu\neq \mu_k(\lambda)$ for all $k$, there is a unique function $\hat{u}\in \Hone$ satisfying \eqref{e.AppABM} for all $v\in \Hone$, and $\hat{u}$ has the spectral representation
\beq 
\hat{u} \; = \; \sum_{k=1}^\infty   \frac{ 1 }{\mu_k(\lambda) - \mu}  \, \ell(e_k(\lambda)) \, e_k(\lambda).
\eeq 
\etm 

Bounds for the solution $\hat{u}$ of this nonresonance problem are given by specific equivalent norms on $\Hone$ and related to Steklov-Robin eigencurves as stated next.

\btm 
Assume $(C1)$-$(C5)$, hold, and let $(\lambda, \mu)\notin \sigma(a, b,m)$ and $\ell\in (\Hone))^*$ be fixed.
Let $\tau>0$ be large enough so that the bilinear form on $\Hone$ given by
\beq a_{\lambda \tau}(u, v) \; := \; \int_\Omega [A\nabla u\cdot \nabla v + c\, u\, v\, +\; \tau \, m_0\, u\, v ]\, dx  +  \int_{\partial\Omega} [b_c\, u \, v\, - \, \lambda b_0\, u\, v ] \, d\sigma  
\eeq 
is coercive, and denote by $\|\cdot \|_{a_{\lambda \tau}^*}$ the dual norm on $(\Hone)^*$ when $\Hone$ is equipped with the norm $\| u \|_{a_{\lambda, \tau}} := \sqrt{ a_{\lambda \tau}(u, u) }$. 
	\begin{itemize}
	\item[$(i)$] When $\mu < \mu_1(\lambda)$, then the unique solution $\hat{u}$ to \eqref{e.AppABM} is bounded by 
\beq
\|\hat{u}\|_{a_{\lambda\tau}} \; \leq \; \left(   \frac{\mu_1(\lambda) + \tau}{\mu_k(\lambda) -\mu}    \right) \, \|\ell\|_{a_{\lambda\tau}^*}
\eeq
	\item[$(ii)$]  When $\mu_{k_0}(\lambda) < \mu < \mu_{k_0+1}(\lambda)$, then the unique solution $\hat{u}$ to \eqref{e.AppABM} is bounded by
\beq
\|\hat{u}\|_{a_{\lambda\tau}} \; \leq \;
 \frac{(\mu + \tau)( \mu_{k_0+1}(\lambda) - \mu_1(\lambda)}{  (\mu_1(\lambda) + \tau) (\mu_{k_0+1}(\lambda) - \mu)}\,  \| P_{k_0}\hat{u} \|_{a_{\lambda \tau}} 
\; +\; 
 \left(  \frac{\mu_{k_0+1}(\lambda) + \tau}{ \mu_{k_0+1}(\lambda) - \mu} \right) \|\ell\|_{a_{\lambda\tau}^*} 
\eeq
where 
\beq 
P_{k_0}\hat{u}  \; := \; \sum_{k=1}^{k_0} \frac{ \ell(e_k(\lambda)) }{ \mu_k(\lambda) - \mu } e_k(\lambda)
\eeq 
is the $k_0^{th}$-spectral approximation of $\hat{u}$.
	\end{itemize}
\etm

Since the bilinear forms $(a, b, m)$ for the Steklov-Robin system satisfy (C1)-(C5), and thus satisfy conditions (A1)-(A3) in \S \ref{s.definitions} above, with $V=\Hone$, these two theorems are a special case of Theorems \ref{E:NonSpec} and \ref{E:NonSpec2} of \S \ref{s.LinearFredholm}.

The resonance alternative for the linear Steklov-Robin Fredholm equation \eqref{e.AppABM} is given in the next theorem, and is a special case of Theorem \ref{E:ResFred}.
To simplify the statement, for fixed $(\lambda, \mu)\in \sigma(a,b,m)$ define
\[
J_{\lambda, \mu} \; := \; \{k\in \N :  \mu_k(\lambda)=\mu\} \qquad \text{and}\qquad E_{\lambda,\mu} \; := \; \text{span}\{e_k(\lambda) :  k\in J_{\lambda,\mu}\};
\]
this notation was used in Theorems \ref{E:ResFred} and \ref{T:Res} for the abstract results at resonance.

\btm  Assume $(C1)$-$(C5)$ hold, and let $(\lambda, \mu)\in \sigma(a, b, m)$ and  $\ell \in (\Hone)^*$ be fixed. 
Then $\mu  = \mu_{k_0}(\lambda)$ for some $k_0$ and 
\begin{itemize}
\item[$(i)$]\quad there are solutions $\hat{u}$ to \eqref{e.AppABM} if and only if $\ell(e) = 0$ for all $e\in E_{\lambda,\mu} \subset \Hone$.
\item[$(ii)$]\quad the solution set $S_{\lambda\mu}$ to \eqref{e.AppABM} is the affine subspace comprised of all $\hat{u}$ of the form 
\beq
\hat{u} \; =\; \sum_{ k\notin J_{\lambda, \mu}} \frac{1}{\mu_k(\lambda) - \mu} \, \ell(e_k(\lambda))\, e_k(\lambda) \; + \; \hat{v}
 \qquad \text{where }\quad 
\hat{v}\in E_{\lambda, \mu}
\eeq
where $e_k(\lambda) \in \Hone$ satisfy the Steklov-Robin eigenequation \eqref{e.SReigenequation} with $\mu = \mu_k(\lambda)$ and are normalized with respect to the $m$-norm, i.e. $\| e_k(\lambda) \|_m^2 : = m(  e_k(\lambda), e_k(\lambda) ) =1$.
\end{itemize}
\etm

\subsection{Nonlinear Steklov-Robin Fredholm Alternative}
Let $F:\Hone\times \Hone \rightarrow \mathbb{R}$ be the form given by  \eqref{e.SRFv2}, assume $(\lambda, \mu)\in \R^2$, and let $\varepsilon\in \R$.
The {\it nonlinear Steklov-Robin Fredholm equation}, whose solution $u$ is sought in $\Hone$,  is given by
\beq\label{e.NSRF}
\int_\Omega [A\nabla u\cdot \nabla v + c\, u\, v\, - \mu_k(\lambda)\, m_0\, u\, v ]\, dx  +  \int_{\partial\Omega} [b_c\, u \, v\, - \, \lambda b_0\, u\, v ] \, d\sigma  \; = \; \varepsilon F(u, v) 
\eeq  
and is to be satisfied for all $v\in \Hone$.

To treat the nonlinear problem \eqref{e.NSRF}, first  define the functional $\mathscr{F}:\Hone \rightarrow \mathbb{R}$ by
\beq\label{e.Functional}
\mathscr{F}(u) \; : = \; \frac{1}{p}\int_\Omega f(x, u)\, u\, dx \; + \; \frac{1}{p}\int_{\partial\Omega}g(x,u)\, u\, d\sigma 
\eeq 
The basic regularity of this functional is the following.

\btm
Assume $(Cf)$ and $(Cg)$ hold, and $\mathscr{F}$ is given by \eqref{e.Functional}.
\begin{itemize}
    \item[(i)] $\mathscr{F}$ is positive, convex, and weakly continuous on $\Hone$.
    \item[(ii)] $\mathscr{F}$ is G\^{a}teaux differentiable at $u\in \Hone$ with first variation given by
    \beq
    \delta \mathscr{F}(u;v) \; =\; |\ell_f(u)|^{p-2}\ell_f(u)\ell_f(v)  \; + \; |\ell_g(u)|^{p-2}\ell_g(u)\ell_g(v)
    \eeq 
    with direction vector $v\in \Hone$. 
    \item[(iii)] $\mathscr{F}$ has a second variation at $u$ in the directions $v, w\in \Hone$ given by
    \beq 
    \delta^2\mathscr{F}(u; v, w) \; = \; (p-1)|\ell_f(u)|^{p-2}\ell_f(v)\ell_f(w) \; + \; (p-1)|\ell_g(u)|^{p-2}\ell_g(v)\ell_g(w)
    \eeq 
\end{itemize}
\etm 

\bpf
The real-valued function $s\mapsto |s|^p$ is convex, positive, and continuously differentiable on $\mathbb{R}$.
Rewriting the functional in \eqref{e.Functional} as
\[
\mathscr{F}(u) = \frac{1}{p}|\ell_f(u)|^p + \frac{1}{p}|\ell_g(u)|^p
\]
it follows from results on compositions, and that $\ell_f$ and $\ell_g$ are continuous linear functionals, that $\mathscr{F}$ is convex, positive and continuous.
This implies $\mathscr{F}$ is weakly $l.s.c.$ on $\Hone$.
By the chain rule for G\^{a}teaux derivatives, and the formulae for the first and second (classical) derivatives of $s\mapsto  |s|^p$, the formulae for the first and second variations of $\mathscr{F}$ hold.
\epf

\bcor 
If conditions $(Cf)$, $(Cg)$ hold, then the functional $\mathscr{F}$ given by \eqref{e.Functional}  is twice Fr\'{e}chet differentiable
\ecor 

\bpf
The image of the map 
\[
u\mapsto \delta\mathscr{F}(u, \cdot)\; = \; |\ell_f(u)|^{p-2}\ell_f(u)\ell_f(\cdot )  \; + \; |\ell_g(u)|^{p-2}\ell_g(u)\ell_g(\cdot )
\]
is a sum of $\ell_f$ and $\ell_g$  in $(\Hone )^*$ with coefficients $|\ell_f(u)|^{p-2}\ell_f(u)$ and $|\ell_g(u)|^{p-2}\ell_g(u)$ being continuous in $u$ as $p>2$.
Then the image of the map
\[
u\; \mapsto \;  
\delta^2\mathscr{F}(u; \cdot , \cdot ) \; = \; 
(p-1)|\ell_f(u)|^{p-2}\ell_f(\cdot )\ell_f(\cdot ) \; + \;(p-1)|\ell_g(u)|^{p-2}\ell_g(\cdot )\ell_g(\cdot )
\]
is a sum of the continuous symmetric bilinear forms $\ell_f\otimes \ell_f$ and $\ell_g\otimes \ell_g$ with coefficients $(p-1)|\ell_f(u)|^{p-2}$ and $(p-1)|\ell_g(u)|^{p-2}$ continuous in $u$ as $p>2$.
By the basic relationship between G\^{a}teaux derivatives and Fr\'{e}chet derivatives, the desired result follows; see, for instance, Lemma 2.3.2 of Blanchard and Br\"{u}ning \cite{BlanchardBruning92}.
\epf 

With this regularity established for $\mathscr{F}$, the nonresonance alternative for the nonlinear Steklov-Robin Fredholm equation \eqref{e.NSRF} can now be stated.

\begin{thm}  
Assume $(C1)$-$(C5)$, $(Cf)$ and $(Cg)$ hold, and that the map $F$ is as in \eqref{e.SRFv2}.
If $(\lambda, \mu) \notin \sigma(a, b, m)$, then there exists a $\delta>0$ such that if $-\delta<\ve<\delta$, then \eqref{e.NSRF} has a locally unique solution $u_\ve$. 
Further, the mapping $u:(-\delta, \delta)\to H^1(\Omega)$, defined by $u(\ve)=u_\ve$, is continuously differentiable with $\ds\lim_{\ve\to 0}u(\ve)=0$.
\end{thm}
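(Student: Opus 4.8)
The plan is to recognize this statement as the concrete incarnation of the abstract Theorem \ref{T:Nonres} with $V=\Hone$, so that the proof reduces to checking that the two structural hypotheses of that theorem hold for the data of \eqref{e.NSRF}. The spectral hypothesis $(\lambda,\mu)\notin\sigma(a,b,m)$ is assumed outright. For the structural assumptions on the forms, I would invoke the assertion already recorded immediately after \eqref{e.SRabm}: when (C1)--(C5) hold, the bilinear forms $a,b,m$ of \eqref{e.SRabm} satisfy (A1), (A2), (A3) on $\Hone$. Concretely, boundedness and coercivity of $a$ follow from the ellipticity bounds in (C1), the sign conditions in (C2), (C3), the compatibility condition in (C3), and a Poincaré-type inequality; weak continuity of $b$ and $m$ comes from the compactness of the trace map $\Hone\to L^2(\partial\Omega,d\sigma)$ and of the embeddings $\Hone\hookrightarrow L^r(\Omega)$ combined with Hölder's inequality against $b_0\in L^\infty(\partial\Omega)$ and $m_0\in L^q(\Omega)$, $q>N/2$; and positivity of $\mathcal{M}$ is precisely the last displayed condition in (C4). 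Also, for fixed $u$ the map $v\mapsto F(u,v)$ is a linear combination of $\ell_f$ and $\ell_g$, hence lies in $(\Hone)^*$, so $F(u,\cdot)\in V^*$ as required.

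The substance of the argument is the remaining hypothesis: that $u\mapsto F(u,\cdot)$ is a continuously differentiable map from $\Hone$ into $(\Hone)^*$. Here I would identify $F$ with the first variation of the functional $\mathscr{F}$ of \eqref{e.Functional}. Pulling the $u$-independent scalar out of the interior integral gives $\int_\Omega f(x,u)\,v\,dx=|\ell_f(u)|^{p-2}\ell_f(u)\int_\Omega f_0\,v\,dx=|\ell_f(u)|^{p-2}\ell_f(u)\,\ell_f(v)$, and likewise $\int_{\partial\Omega}g(x,u)\,v\,d\sigma=|\ell_g(u)|^{p-2}\ell_g(u)\,\ell_g(v)$; comparing with the formula for $\delta\mathscr{F}(u;v)$ yields $F(u,v)=\delta\mathscr{F}(u;v)$ for all $u,v\in\Hone$. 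By the corollary following the regularity theorem for $\mathscr{F}$, the functional $\mathscr{F}$ is twice Fréchet differentiable, so its first Fréchet derivative $u\mapsto D\mathscr{F}(u)$, which by uniqueness coincides with $u\mapsto F(u,\cdot)$, is itself Fréchet differentiable with derivative $\delta^2\mathscr{F}(u;\cdot,\cdot)$; and the $u$-continuity of this second derivative — that is, continuity of the coefficients $(p-1)|\ell_f(u)|^{p-2}$ and $(p-1)|\ell_g(u)|^{p-2}$, valid since $p>2$ — is exactly what the corollary's proof supplies. Hence $u\mapsto F(u,\cdot)$ is of class $C^1$ from $\Hone$ to $(\Hone)^*$.

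With both hypotheses in place, I would conclude by applying Theorem \ref{T:Nonres} verbatim with $V=\Hone$: equation \eqref{e.NSRF} is precisely \eqref{e.nonresonance} for this triple $(a,b,m)$ and this $F$, so there is a $\delta>0$ such that for $-\delta<\ve<\delta$ it has a locally unique solution $u_\ve$, and the map $\ve\mapsto u_\ve$ is continuously differentiable with $\lim_{\ve\to0}u(\ve)=0$. I expect the only delicate point to be organizational rather than conceptual: one must be explicit that the restriction $p>2$ in (Cf), (Cg) is what secures genuine $C^1$ (not merely Gâteaux) regularity of $u\mapsto F(u,\cdot)$, since for $1<p<2$ the singularity of $s\mapsto|s|^{p-2}$ at the origin would obstruct the argument, whereas the case $p=2$ is trivial because $F$ is then bounded and linear in $u$. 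No estimate beyond the cited corollary and the remark after \eqref{e.SRabm} is needed, since the series/spectral machinery underlying the map $G$ in the proof of Theorem \ref{T:Nonres} is available here exactly because (A1)--(A3) hold.
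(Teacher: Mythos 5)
Your proposal is correct and follows the same route as the paper: the paper's proof simply notes that $u\mapsto F(u,\cdot)$ is Fr\'{e}chet differentiable by the corollary on the twice-differentiability of $\mathscr{F}$ and then cites Theorem \ref{T:Nonres} with $V=\Hone$, exactly as you do. Your additional verifications (that (A1)--(A3) hold for the forms in \eqref{e.SRabm}, that $F(u,\cdot)\in(\Hone)^*$, and the identification $F(u,v)=\delta\mathscr{F}(u;v)$) are all points the paper leaves implicit or records elsewhere, so your write-up is just a more explicit version of the same argument.
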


\bpf 
The map $u\mapsto F(u, \cdot)$ is Fr\'{e}chet differentiable by the previous corollary, so this theorem is a special case of Theorem \eqref{T:Nonres}.
\epf 

The (linear and nonlinear) Steklov-Robin Fredholm equation has been presented to exemplify the ease of applicability of the abstract results of this paper to quite general equations involving interior and boundary nonlinearities. Applications in the case of resonance are similar, but more delicate. For readers interested in a concrete example applying ideas similar to those in theorem \ref{T:Res} (in the case $\lam=0$) see \cite{Mar3}.


\vspace{1em}
\section{\bf Appendix: Fr\'{e}chet Differentiability of the Nonlinearity}\label{s.appendix}

In this appendix, we give an example of when the differentiability condition of Theorem \ref{T:Nonres} and Theorem \ref{T:Res} may hold.  For simplicity, the example is 1-dimsensional, but similar results hold in higher dimensions.

Let $H:=\{u \in AC[0, 1] \; : \; u' \in L^2[0, 1] \}$ be the Sobolev space with the usual norm
\[
\|u\|_{1,2} \; := \; \|u\|_2 + \|u'\|_2
\]
and suppose $F: \R \rightarrow \R$ is a given function.

\btm
If $F\in C^2(\R)$ and both $F'$ and $F''$ are bounded, then the operator $T_F: H\rightarrow H$ defined by $T_F(u)(t) \; : = \; F(u(t))$ is Fr\'{e}chet differentiable with $DT_F(u)(h)=F'(u)h$ for any $u,h \in H.$
\etm

\bpf
First we show $T_F$ is well-defined, so let $u\in H$ be fixed.
The Mean-Value Theorem together with the boundedness of $F'$ imply
\[
|F(u_2) - F(u_1)| \; \leq \; \|F'\|_{\infty}|u_2 - u_1| \qquad \text{for all }u_1, u_2\in \R.
\]
Let $\varepsilon >0$. 
Since $u\in AC[0, 1]$, choose $\delta>0$ so that 
\[
0\leq a_1\leq b_1 \leq \cdots \leq a_n\leq b_n \leq 1 \text{ with } \sum_{i=1}^n b_i - a_i \; < \; \delta \qquad \text{implies}\qquad \sum_{i=1}^n|u(b_i) - u(a_i)| \; < \; \varepsilon.
\] 
Putting these two results together gives
\[
\sum_{i=1}^n|F(u(b_i)) - F(u(a_i))| \; \; \leq \; \; \|F\|_{\infty} \sum_{i=1}^n  |u(b_i) -  u(a_i)| \; \;< \; \; \|F'\|_{\infty}\cdot \varepsilon,
\]
so that $T_F(u) \in AC[0, 1]$.
Since $T_F(u)'(t) = F'(u(t))u'(t)$ holds for almost every $t\in [0,1]$,  
\[
\|T_F(u)'\|_2 \; = \; \|F'(u)\, u' \|_2 \; \leq \; \|F'\|_{\infty}\|u'\|_2.
\]
Hence, $T_F(u) \in H$ since $u'\in L^2[0, 1]$.

Boundedness of $F'$ implies continuity of $T_F$ since for $u, v\in H$ the work above gives
\[
\|T_F(u) - T_F(v)\|_{1,2} \; \leq \; \|F'\|_\infty \left( \|u - v\|_2 + \|u' - v'\|_2\right) \; = \; \|F\|_\infty\|u - v \|_{1,2}.
\]

We finish the proof by showing that $T_F$ is continuously differentiable with derivative  $DT_F(u) = F'(u)$, that is, $DT_F(u)(h)  \; = \; F'(u)\, h$ for all $h\in H$. Note that with our notation above $DT_F(u)(h) = T_{F'}(u)h$.
As was shown above, since $F'$ is bounded, $T_{F'}$ is a map from $H$ to $H$, and since products of absolutely continuous functions are again absolutely continuous, $DT_F(u)(h)$ belongs to $AC[0, 1]$.
For almost every $t\in [0, 1]$, 
\[
(DT_F(u)(h))'(t) \; =\; F''(u(t))u'(t)h(t)  \;  +  \; F'(u(t))h'(t).
\]
The boundedness of $F', F''$ and $h$ (since $h\in AC[0,1]$) give the bound
\[
\|(DT_F(u)(h))'\|_2 \; \leq \; \|F''\|_\infty \|h\|_\infty \|u'\|_2 \; + \; \|F'\|_\infty \|h'\|_2,
\]
so that $(DT_F(u)(h))' \in L^2[0,1]$ and therefore $DT_F(u)(h) \in H$.
The estimates above give
\begin{align*}
\|DT_F(u)h\|_{1,2} \;\;  =&  \; \; \|DT_F(u)h\|_2 \;  + \;  \|(DT_F(u)h)'\|_2 \; \\
\leq &  \; \; \|F\|_\infty \|h\|_2 \; + \;  \|F''\|_\infty \|h\|_\infty \|u'\|_2 \;  +  \; \|F'\|_\infty\|h'\|_2.
\end{align*}
By the Sobolev imbedding theorem, $\|h\|_\infty \leq C\|h\|_{1,2}$ for some constant $C>0$, see Brezis \cite{B10}.
Thus, $DT_F(u)$ is a bounded linear map from $H$ to $H$.

A more careful estimate gives
\begin{align*}
\|(DT_F(u) - DT_F(v))h\|_{1,2} \; \leq  & \; \|  (F'(u) - F'(v))h\|_2 \; + \; \| (F''(u)u' - F''(v)v')h + (F'(u) - F'(v))h'\|_2 \\
\leq & \; C \|F''\|_\infty \|u - v\|_{1,2}\|h\|_{1,2},
\end{align*}
where $C$ is a new constant.
It follows that $\|DT_F(u) - DT_F(v) \|_{1,2} \; \leq \; \|F''\|_\infty \|u - v\|_{1,2}$, so the the map $u\mapsto DT_F(u)$ is continuous.

We now finish the proof by showing that $DT_F(u)(h)=F'(u)h$.  To see this note that for $u,h\in H$ we have 
\beqq
\begin{split}
|T_F(u+h)(t)-T_F(u)(t)-F'(u(t))h(t)|&=|F(u(t)+h(t))-F(u(t))-F'(u(t))h(t)|\\&=|F'(c(h,t))-F'(u(t))||h(t)|
\end{split}
\eeqq
where $c(h,t)\in[u(t),u(t)+h(t)]$.
Calculating 
\beqq
|T_F(u+h)'(t)-T_F(u)'(t)-(F'(u(t))h(t))'|
\eeqq
we get, at least for almost every $t\in[0,1]$,
\beqq
\begin{split}
|F'(u(t)+h(t))(u'(t)+h'(t))&-F'(u(t))u'(t)-F''(u(t))u'(t)h(t)-F'(u(t))h'(t)|=\\
&|F''(d(h,t))-F''(u(t))||h(t)||u'(t)|+|F'(u(t)+h(t))-F'(u(t))||h'(t)|\\
\end{split}
\eeqq
where $d(h,t)\in[x(t),x(t)+h(t)]$.
The continuity of $F', F''$ imply the differentiability of $T_F$, since by the Sobolev embedding theorem, $||h||_{1,2}\to 0$ implies $||h||_\infty\to 0$. 
\epf

\end{document}